\documentclass[11pt]{article}

\usepackage[T1]{fontenc}
\usepackage{lmodern}
\usepackage{amsmath,amssymb,amsthm,mathtools}
\usepackage{microtype}
\usepackage{tikz}
\usetikzlibrary{arrows.meta,calc,positioning}
\usepackage[margin=1in]{geometry}
\usepackage{enumitem}
\usepackage{booktabs}
\usepackage{array}
\usepackage{xcolor}
\usepackage{subcaption}
\usepackage[section]{placeins}
\usepackage[numbers,sort&compress]{natbib}
\usepackage[hidelinks]{hyperref}
\usepackage[nameinlink,capitalise,noabbrev]{cleveref}

\definecolor{oldred}{RGB}{165,48,48}
\definecolor{newblue}{RGB}{30,88,155}
\hypersetup{
  pdftitle={The Class Edge-Reconstruction Number of a Maximal Planar Graph Is One or Two},
  pdfauthor={Sergey Ivanov}
}

\newtheorem{theorem}{Theorem}[section]
\newtheorem{lemma}[theorem]{Lemma}
\newtheorem{proposition}[theorem]{Proposition}
\newtheorem{corollary}[theorem]{Corollary}

\theoremstyle{definition}

\theoremstyle{remark}
\newtheorem{remark}[theorem]{Remark}

\newcommand{\M}{\mathcal M}
\newcommand{\ED}{\operatorname{ED}}
\newcommand{\Cern}{\operatorname{Cern}}

\newcommand{\eps}{\boldsymbol\varepsilon}

\title{The Class Edge-Reconstruction Number of a\\
Maximal Planar Graph Is One or Two}
\author{Sergey Ivanov}
\date{}

\begin{document}
\maketitle

\begin{abstract}
An edge card of a graph is obtained by deleting one edge, and a class
edge-reconstruction number asks for the fewest carefully selected cards that
identify the graph when its class is known.  We determine the sharp universal
bound for maximal planar graphs.  Two selected cards always suffice, and the
octahedral graph shows that two can be necessary; some maximal planar graphs
are already identified by one card.  The argument exploits the fact that
deleting a flippable edge leaves a single quadrilateral whose two diagonals
give the only possible maximal-planar completions.  Degree information then
rules out the competing completion, with a separate argument for graphs
containing a vertex of degree three.  This settles a problem posed in a 2010
survey on reconstruction numbers.
\end{abstract}

\noindent\textbf{Keywords.} graph reconstruction; edge deck; reconstruction
number; maximal planar graph; planar triangulation; diagonal flip.

\section{Introduction}

Graph reconstruction asks how much information about an unlabeled graph
survives after one vertex or one edge has been removed.  Harary formulated
the edge-reconstruction problem in the early development of the subject
\cite{Harary1964}; Bondy and Hemminger's classical survey records the broader
vertex and edge theories and their connections \cite{BondyHemminger1977}.
Harary and Plantholt then introduced the quantitative question relevant here:
how many \emph{carefully selected} cards, rather than the whole deck, are
actually needed to identify a graph \cite{HararyPlantholt1985}?  The survey of
Asciak, Francalanza, Lauri, and Myrvold gives the corresponding multiset
definition for class edge reconstruction and isolates the present question
as its Problem~2.5 \cite{AsciakEtAl2010}.  Lauri had earlier recommended the
class edge-reconstruction numbers of trees and maximal planar graphs as a
particularly interesting direction \cite{Lauri1993}. Recent results illustrate how much ambiguity can survive vertex deletion.
Nonisomorphic graphs can share an arbitrarily large fraction of their
vertex-deleted cards \cite{IvanovDeckOverlap2026}, while pseudosimilarity can
persist through consecutive deletions for all but a sublinear number of
vertices \cite{IvanovPseudosimilarity2026}.

Maximal planar graphs are a natural testing ground.  They are precisely the
simple triangulations of the sphere (apart from harmless small-order
conventions), and their embeddings are rigid.  On the vertex side, Fiorini
and Lauri proved recognition of maximal planarity from the full vertex deck
\cite{FioriniLauri1981Recognition}, and Lauri completed the reconstruction
theorem \cite{Lauri1981Reconstruction}.  Harary and Lauri obtained the much
stronger selected-card result: within the class of maximal planar graphs the
class \emph{vertex} reconstruction number is always one or two, with the
one-card graphs characterized \cite{HararyLauri1987}.  This result is the
closest predecessor of the theorem proved here, but it concerns
vertex-deleted cards.

There is also substantial full-deck edge theory.  Fiorini proved that maximal
planar graphs of minimum degree at least four are edge-reconstructible
\cite{Fiorini1978Edge}.  Lauri treated planar graphs of minimum degree five
\cite{Lauri1979}; Fiorini and Lauri proved full-edge-deck results for
four-connected planar graphs \cite{FioriniLauri1982FourConnected} and for
important classes of surface triangulations
\cite{FioriniLauri1982Surfaces}.  Fan's four-part series culminated in
edge-reconstructibility for planar graphs of minimum degree at least three
\cite{FanSeriesI,FanSeriesII,FanSeriesIII,FanSeriesIV}; Zhao obtained further surface-embedding results
\cite{Zhao1993,Zhao1998II,Zhao1998III}.  Greenwell's classical reduction from
the edge deck to the vertex deck supplies another bridge between the two
theories \cite{Greenwell1971}, and Maccari, Rueda, and Viazzi survey the
full-deck edge literature \cite{MaccariEtAl2002}.

Those theorems establish reconstruction from the \emph{complete} edge deck.
They do not bound the number of selected cards needed when the parent is
promised to be maximal planar.  This distinction is substantial: a maximal
planar graph on \(n\) vertices has \(3n-6\) edge cards, whereas the result
below reduces the witness to at most two.

Asciak et al. asked exactly:
\begin{quote}
``What is the class edge-reconstruction number of a maximal planar graph?''
\cite[Problem~2.5]{AsciakEtAl2010}
\end{quote}
We prove that the answer is always one or two and that two is the sharp
universal bound.

The central geometric observation is simple.  Deleting a flippable edge
merges its two incident triangular faces into a quadrilateral.  The two
diagonals of that quadrilateral give the only possible maximal-planar parents
of the card.  Thus the first card leaves at most one nonisomorphic rival, and
the task of the second card is merely to exclude that rival.  Degree
signatures do so when the minimum degree is at least four.  When a
degree-three vertex is present, a degree-transfer identity provides the
additional rigidity.

The 2010 survey later says that no maximal planar graph has class
edge-reconstruction number one \cite[p.~453]{AsciakEtAl2010}.  Under the
multiset definition printed in that survey, this ancillary sentence is not
correct: the unique five-vertex triangulation is already identified by one
edge card, and \cref{prop:infinite-one-card} gives infinitely many further
examples.  This correction does not diminish Problem~2.5 itself; it clarifies
one of the two values in its resolution.

\section{Selected edge cards and the main result}

All graphs in this paper are finite and simple.  Let \(G\) be a graph and
\(e\in E(G)\).  The graph \(G-e\), with both endpoints retained, is an
\emph{edge card}.  The edge deck
\[
  \ED(G)=\{\!\{,G-e:e\in E(G),\}\!\}
\]
is the multiset of its unlabeled edge-card isomorphism types.

Let \(\M\) be the class of maximal planar graphs.  A selected subdeck
\(S\preccurlyeq \ED(G)\) \emph{identifies \(G\) within \(\M\)} if, for every
\(H\in\M\),
\[
             S\preccurlyeq \ED(H)\quad\Longrightarrow\quad H\cong G.
\]
The least size of such an \(S\), counting multiplicity, is denoted by
\(\Cern_{\M}(G)\).

\begin{remark}[The multiset convention]\label{rem:multiset}
Two selected cards may be two distinct physical copies of the same
isomorphism type.  They may not exceed that type's multiplicity in the deck.
This convention is part of the definition in \cite{AsciakEtAl2010} and is
essential for the octahedral sharpness example in \cref{prop:octahedron}.
\end{remark}

\begin{theorem}[Main theorem]\label{thm:main}
Every finite simple maximal planar graph can be identified, among maximal
planar graphs, from at most two selected edge cards.  Some maximal planar
graphs require two cards.
\end{theorem}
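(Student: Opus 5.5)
The plan follows the strategy sketched in the introduction. The first step is to handle small cases directly. For \(n\le 4\) the class \(\M\) has a unique member of each order (\(K_3\), \(K_4\)), and since \(|E|=3n-6\) the edge count of any card determines \(n\); so a single card identifies the graph. From now on assume \(n\ge 5\). Then every maximal planar graph is \(3\)-connected with a unique embedding up to reflection, and every face is a triangle.

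The key lemma concerns flippable edges. Let \(e=uv\) have incident faces \(uvx\) and \(uvy\), and suppose \(xy\notin E(G)\). Then \(G-e\) is planar with \(3n-7\) edges. Any \(H\in\M\) having \(G-e\) as a card is \(G-e\) plus one edge \(f\). I will argue that \(G-e\) is a subdivision-free \(2\)-connected (in fact \(3\)-connected, or nearly so) plane graph whose embedding is still essentially unique, with exactly one non-triangular face \(uxvy\). Adding \(f\) must triangulate this face, so \(f\in\{uv,xy\}\). Hence the only possible parents are \(G\) and \(G'=G-uv+xy\), the flip. The rigidity of the embedding of \(G-e\) is delicate. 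When \(\{x,y\}\) becomes a \(2\)-cut, I would instead argue directly: a planar graph with \(3n-7\) edges plus one edge is planar with \(3n-6\) edges, and by Euler's formula every face of the new embedding is a triangle. Restricting that embedding to \(G-e\) leaves exactly one quadrilateral face, whose diagonals are the only options. Edges for which every choice is non-flippable exist only in \(K_4\), so for \(n\ge5\) we can always choose a flippable \(e\).

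With one card from a flippable edge, the only possible rival is \(G'\). If \(G'\cong G\) we are done with one card. Otherwise the second card must be a card of \(G\) that is not a card of \(G'\), or another copy of \(G-e\) when the multiplicity of that type in \(\ED(G)\) exceeds its multiplicity in \(\ED(G')\). Flipping changes the degree sequence: \(\deg u,\deg v\) drop by one and \(\deg x,\deg y\) rise by one. Cards encode the degree sequence minus two endpoint decrements. So I compare the multisets of endpoint-degree pairs \(\{\deg a,\deg b\}\) over edges \(ab\) in \(G\) and in \(G'\). Since the card degree sequence recovers the pair, any pair type occurring more often in \(G\) than in \(G'\) gives a distinguishing card. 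If these pair multisets coincide, I will exploit the freedom to choose \(e\): pick \(e\) to maximize \(\deg u+\deg v-\deg x-\deg y\), or choose \(e\) incident to a maximum-degree vertex. Then the rival has strictly different degree data, for instance a different maximum degree or a different count of maximum-degree vertices, and some card of \(G\) is absent from \(\ED(G')\). This is straightforward when \(\delta\ge4\). When \(\delta=3\), flips can create or destroy degree-\(3\) vertices, and the pair statistics may balance. There I would choose \(e\) with \(x\) a degree-\(3\) vertex, so the flip raises it to \(4\) and changes the number of degree-\(3\) vertices. Alternatively, I would use the degree-transfer identity \(\sum_{ab\in E}(\deg a+\deg b)=\sum_w\deg(w)^2\) to show \(\ED(G)\ne\ED(G')\) at a specific card. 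I expect this \(\delta=3\) case analysis, ensuring an actual extra card exists rather than merely differing global invariants, to be the main obstacle.

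For sharpness, take the octahedron. All its edges are equivalent, so every card is the same type \(C\). Flipping any edge of the octahedron yields a different maximal planar graph on \(6\) vertices, with degree sequence \((5,5,4,4,3,3)\), which also has \(C\) as a card. So one card does not suffice, while two copies of \(C\) exceed \(C\)'s multiplicity in the rival's deck; this is exactly \cref{prop:octahedron}.
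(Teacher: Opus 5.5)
\textbf{Verdict: genuine gap.} Your reduction to two candidates (the card $G-e$ has only the parents $G$ and its flip mate $G'=G^e$) is the right skeleton. The real content of the theorem is producing a second card that excludes $G'$, and that step is not supplied; the criterion you propose for it is incorrect.

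\emph{Why the pair-counting criterion fails.} A card's degree sequence determines the endpoint-degree pair of the deleted edge only relative to its parent's degree sequence, and $G$ and $G'$ generally have different degree sequences. So $G-g\cong G'-h$ does not force equal pairs. It forces $A_G(g)-A_{G'}(h)=Q$, where $Q=A_G(e)-A_{G'}(xy)$ is a fixed residual coming from the common card (\cref{lem:residual}).
\begin{itemize}
\item Your own sharpness example refutes the rule: the $\{4,4\}$-cards of the octahedron coincide with the card of its flip mate obtained by deleting the $\{5,5\}$-edge.
\item The same example refutes the fallback ``the rival has different degree data, so some card of $G$ is absent from $\ED(G')$.'' Every card type of the octahedron occurs in the rival's deck; only multiplicity separates them.
\item Proving $\ED(G)\neq\ED(G')$ does not suffice either. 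With $C=G-e$ fixed, a second card $T\not\cong C$ excludes $G'$ only if $T$ is entirely absent from $\ED(G')$. Taking $T\cong C$ works only if $C$ has multiplicity at least two in $\ED(G)$ but one in $\ED(G')$.
\item Your heuristics (maximize $\deg u+\deg v-\deg x-\deg y$, or take $e$ at a maximum-degree vertex) come with no argument. The claim that the case $\delta\ge4$ is ``straightforward'' is also unsupported: regular graphs force the multiplicity route.
\end{itemize}

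\emph{What a working argument needs.}
\begin{itemize}
\item For $\delta\ge4$: take $e$ flippable at a minimum-degree vertex, which exists because noncrossing blockers leave at least two flippable spokes at degree four and three at degree five. Then $G'$ has a vertex of degree $\delta-1$. Choose $g$ according to the set $R$ of vertices of degree above $\delta$.
  \begin{itemize}
  \item If $R$ contains an edge, deleting it gives a card of minimum degree $\delta$, which $G'$ cannot produce.
  \item If $R$ is nonempty and independent, you need a separate lemma giving a flippable edge with both ends of degree $\delta$; an edge from a degree-$\delta$ vertex to $R$ then gives a card signature $G'$ lacks.
  \item If $G$ is regular, take two cards with the regular signature; $G'$ has only one such card.
  \end{itemize}
\item For $\delta=3$: flip a boundary edge of a cubic vertex $v$ and argue by contradiction through the residual identity.
  \begin{itemize}
  \item Residual mass two forces $G$ to be bipartite or $K_4$.
  \item Mass zero on all three boundary edges forces adjacent cubic vertices in $G-v$.
  \item Mass one forces a single degree class $p$ to cover every edge; the cases $p=4$, $p=5$ and $p\ge6$ must then each be excluded.
  \end{itemize}
\end{itemize}

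\emph{Smaller gap in your two-parent lemma.} Restricting the embedding of a completion only shows that the added edge is a diagonal of a quadrilateral face of \emph{some} embedding of $G-e$. To conclude it is $uv$ or $xy$, you need that embedding to be unique. Your $2$-cut worry is in fact moot: $G-e$ is always $3$-connected for flippable $e$. A separator $S$ with $|S|\le2$ would have to separate $u$ from $v$ (since $G-S$ is connected) and $x$ from $y$ (since $G'-S$ is connected). But the path $u\,x\,v$ survives in $(G-e)-S$, a contradiction. This fact has to be proved rather than bypassed.
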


\begin{corollary}\label{cor:values}
For every \(G\in\M\),
\[
                 \Cern_{\M}(G)\in\{1,2\},
\]
and the maximum possible value is \(2\).
\end{corollary}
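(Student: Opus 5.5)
The corollary is essentially a restatement of \cref{thm:main}, so the plan is to prove the theorem and then read off the values. Two observations are needed first. The bound \(\Cern_{\M}(G)\ge 1\) holds because the empty subdeck is contained in every deck, so it identifies nothing once \(\M\) has more than one graph of a given order. Order is not recovered from the empty subdeck, so even graphs on different vertex counts compete. The sharpness of \(2\) comes from the octahedron, \cref{prop:octahedron}. The main work is therefore the upper bound \(\Cern_{\M}(G)\le 2\) for every \(G\in\M\).

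\textbf{Small cases.} I would dispose of the small orders \(n\le 4\) (\(K_3\) and \(K_4\)) directly, since every edge there is non-flippable. A single card \(K_4-e\) determines \(n=4\) and \(m=6\), hence the parent. The card \(K_3-e\) fixes \(n=3\) and forces \(K_3\).

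\textbf{First card.} For \(n\ge5\), pick a flippable edge \(e=uv\), meaning one whose two opposite vertices \(x,y\) are non-adjacent; such an edge exists for \(n\ge5\). The card \(G-e\) has \(3n-7\) edges, so any maximal planar parent \(H\) has \(n\) vertices and is \(G-e\) plus one edge. Since \(G-e\) is a 3-connected triangulation minus one edge, its embedding is unique with exactly one quadrilateral face \(uxvy\). Any planar completion with one edge must therefore be a diagonal of that face, giving \(H\in\{G,\;G-uv+xy\}\). The second card only has to exclude the rival \(G'=G-uv+xy\) when \(G'\not\cong G\). If \(G'\cong G\) for some choice of flippable \(e\), one card already suffices.

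\textbf{Second card, minimum degree at least \(4\).} Use degree information. The deck multiset of degree sequences of cards determines the degree sequence. A second card \(G-f\) must appear in \(\ED(G')\) but not in \(\ED(G)\) for the rival to survive. \(G\) and \(G'\) have degree sequences differing by \(u,v\) decreasing and \(x,y\) increasing. I would choose \(f\) so that the card's degree multiset, namely the degree sequence of \(G\) with \(\deg a,\deg b\) lowered by one, cannot arise from \(G'\) by lowering two adjacent degrees. Take \(f\) incident to a vertex of maximum degree, or exploit the count of minimum-degree vertices, comparing the multisets \(\{d(u),d(v),d(x),d(y)\}\) under the shift. If the degree multisets of \(G\) and \(G'\) coincide, I would instead select \(e\) to make them differ; otherwise a two-copy argument using \cref{rem:multiset} (taking \(G-e\) twice when \(G-e\) has multiplicity greater than its multiplicity in \(\ED(G')\)) is the fallback.

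\textbf{Second card, a vertex of degree \(3\).} This is the expected main obstacle. Degree-\(3\) vertices sit inside separating triangles, and flips can trade them around while preserving degree sequences. My first attempt would choose \(e\) incident to a degree-\(3\) vertex \(w\), or adjacent to its triangle, so that the flip changes the number of degree-\(3\) vertices. Then a second card whose degree sequence records this number via the identity \(\sum_w (6-\deg w)=12\) (the degree-transfer identity) separates \(G\) from \(G'\). The remaining configurations, where every flippable choice preserves the degree-\(3\) count, I would handle by choosing the second card to delete an edge of the \(K_4\) around \(w\), producing a degree-\(2\) vertex whose neighborhood pattern is incompatible with \(G'\).
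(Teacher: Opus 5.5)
Your outer frame matches the paper's assembly. \(\Cern_{\M}(G)\ge1\) holds because the empty subdeck lies in every deck, and sharpness comes from the octahedron. For \(n\ge5\) the two-parent lemma leaves a single rival \(G'=G^e\), to be excluded by one more card. But the upper bound \(\Cern_{\M}(G)\le2\) is the whole content here, and your two ``second card'' paragraphs do not prove it.

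\textbf{Minimum degree at least four.} You list options but never fix \(e\), and you never show that any option works. In particular, the multiplicity fallback assumes without proof that \(G-e\) occurs more often in \(\ED(G)\) than in \(\ED(G')\). Your criterion is also misstated: \(G-f\) always lies in \(\ED(G)\); the rival survives exactly when \(\{\!\{G-e,G-f\}\!\}\preccurlyeq\ED(G')\).

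The missing idea is to take \(e\) flippable and incident with a vertex of minimum degree \(\delta\). Such an edge exists because the blocking chords in the link are noncrossing. Then \(G'\) has a vertex of degree \(\delta-1\), so every card of \(G'\) has a vertex of degree at most \(\delta-1\). One then splits on the set \(R\) of vertices of degree \(>\delta\):
\begin{itemize}
\item If \(R\) spans an edge \(g\), the card \(G-g\) has minimum degree \(\delta\) and so is not a card of \(G'\).
\item If \(R\) is nonempty and independent, a separate lemma is needed: there is a flippable edge with both ends of degree \(\delta\). This is paired with a card deleting an edge from a degree-\(\delta\) vertex to \(R\), which has exactly one vertex of degree \(\delta-1\).
\item If \(R=\varnothing\), every card of \(G\) has signature \((\delta-1)^2\delta^{n-2}\), while \(G'\) has exactly one such card, namely \(G'-xy\). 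So two physical cards exclude \(G'\).
\end{itemize}

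\textbf{Cubic case.} This part contains actual errors.
\begin{itemize}
\item A spoke \(wa\) of a cubic vertex \(w\) is never flippable: its facial opposites are the other two neighbours \(b,c\), which are adjacent. So near \(w\) only the boundary edges \(ab,bc,ca\) are available.
\item Flipping \(ab\) need not change the number of cubic vertices. For example, \(w\) leaves the class while \(a\) enters it when \(d(a)=4\).
\item The identity \(\sum_w(6-\deg w)=12\) is Euler's formula and holds for \(G\) and \(G'\) alike, so it cannot separate them.
\item A card's degree sequence does not determine how many cubic vertices its parent has.
\item The closing step, about deleting a spoke and an ``incompatible neighbourhood pattern,'' is asserted without argument.
\end{itemize}

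What the paper uses instead is the deletion identity \(\nu(X-rs)=\nu(X)+LA_X(rs)\) with \(L\) injective. Suppose every card of \(G\) also occurs in \(H_f=G-f+vz_f\). Then \(A_G(g)-A_{H_f}(h)=Q_f\) for a fixed residual \(Q_f\). This forces one of two outcomes:
\begin{itemize}
\item one endpoint-degree pair on all edges, which makes \(G\) bipartite or regular and is impossible; or
\item a single degree class \(p\) that covers every edge.
\end{itemize}
The remaining cases are eliminated by bipartiteness, by adjacent cubic vertices in \(G-v\), by a link-length count at \(a\) when \(p=5\), and by the absence of a degree-\(3\)--degree-\(4\) edge in \(H_f\) when \(p\ge6\). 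Your sketch offers no substitute for this argument, so the proposal does not establish \(\Cern_{\M}(G)\le2\).
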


There is also an exact, if deliberately card-based, test for which value
occurs.  A \emph{maximal-planar parent} of a card \(X\) is a graph
\(H\in\M\) for which \(H-f\cong X\) for some edge \(f\).

\begin{proposition}[One-card test]\label{prop:one-card-test}
A maximal planar graph \(G\) has class edge-reconstruction number one if and
only if one of its edge cards has no nonisomorphic maximal-planar parent.
\end{proposition}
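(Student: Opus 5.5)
This is essentially an unwinding of the definition of \(\Cern_{\M}\), so I would prove both directions directly from the definition of identification, with a single selected card.

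\emph{Sufficiency.} Suppose some card \(X=G-e\) has no maximal-planar parent that is nonisomorphic to \(G\). Take \(S=\{\!\{X\}\!\}\). This is a legitimate selected subdeck, since \(X\) occurs in \(\ED(G)\) with multiplicity at least one. Now let \(H\in\M\) with \(S\preccurlyeq\ED(H)\). Then \(X\cong H-f\) for some \(f\in E(H)\), so \(H\) is a maximal-planar parent of \(X\). By hypothesis \(H\cong G\). Hence \(S\) identifies \(G\) within \(\M\), and \(\Cern_{\M}(G)\le 1\).

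\emph{Lower bound.} We also need \(\Cern_{\M}(G)\ge1\), i.e.\ that the empty subdeck does not identify \(G\). The empty subdeck is contained in every deck, so it would have to exclude every \(H\in\M\) with \(H\not\cong G\). Nonisomorphic maximal planar graphs certainly exist (for instance, ones of different orders), so the empty subdeck never identifies \(G\). Consequently \(\Cern_{\M}(G)=1\) exactly.

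\emph{Necessity.} Suppose \(\Cern_{\M}(G)=1\), witnessed by \(S=\{\!\{X\}\!\}\) with \(X\in\ED(G)\), so \(X\cong G-e\) for some edge \(e\). I claim \(X\) has no nonisomorphic maximal-planar parent. Let \(H\in\M\) with \(H-f\cong X\). Then \(X\) occurs in \(\ED(H)\) with multiplicity at least one, so \(\{\!\{X\}\!\}\preccurlyeq\ED(H)\). The identification property then forces \(H\cong G\). So the card \(G-e\) has no maximal-planar parent other than \(G\) up to isomorphism.

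\emph{Where care is needed.} There is no real obstacle here. The only point to watch is the multiset convention of \cref{rem:multiset}: with a single card, containment in a deck only requires multiplicity at least one. So "\(H\) is a parent of \(X\)" is exactly equivalent to "\(\{\!\{X\}\!\}\preccurlyeq\ED(H)\)". This equivalence is what makes the test exact, in contrast with the two-card case, where multiplicities genuinely matter.
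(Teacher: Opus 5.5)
Your proof is correct and is essentially the paper's argument: both directions unwind the definition of identification with the single-card subdeck $\{\!\{X\}\!\}$, using that $H$ is a maximal-planar parent of $X$ exactly when $\{\!\{X\}\!\}\preccurlyeq\ED(H)$. Your explicit check that the empty subdeck never identifies $G$ is a small extra; the paper treats $\Cern_{\M}(G)\ge1$ as holding by definition.
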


This is an exact test, not an intrinsic structural classification of all
one-card triangulations.  Its short proof is in \cref{app:assembly}.

\section{The local diagonal picture}

Fix the spherical embedding of a maximal planar graph \(G\), and let
\(e=uv\).  The two facial triangles containing \(e\) have third vertices
\(x\) and \(y\).  The edge \(e\) is \emph{flippable} when \(xy\notin E(G)\).
Its diagonal flip is
\[
                    G^e=G-uv+xy.
\]

\begin{figure}[htbp]
\centering
\begin{tikzpicture}[
  vertex/.style={circle,fill=black,inner sep=1.7pt},
  boundary/.style={line width=.8pt},
  oldedge/.style={oldred,very thick},
  newedge/.style={newblue,very thick,dashed},
  >=Stealth
]
\begin{scope}[xshift=-4.7cm]
  \node[vertex,label=left:$u$]  (gU) at (-1.2,0) {};
  \node[vertex,label=right:$v$] (gV) at ( 1.2,0) {};
  \node[vertex,label=above:$x$] (gX) at (0,1) {};
  \node[vertex,label=below:$y$] (gY) at (0,-1) {};
  \draw[boundary] (gU)--(gX)--(gV)--(gY)--cycle;
  \draw[oldedge] (gU)--(gV);
  \node at (0,-1.78) {$G$};
\end{scope}

\begin{scope}
  \node[vertex,label=left:$u$]  (cU) at (-1.2,0) {};
  \node[vertex,label=right:$v$] (cV) at ( 1.2,0) {};
  \node[vertex,label=above:$x$] (cX) at (0,1) {};
  \node[vertex,label=below:$y$] (cY) at (0,-1) {};
  \draw[boundary] (cU)--(cX)--(cV)--(cY)--cycle;
  \node at (0,-1.78) {$C=G-uv$};
\end{scope}

\begin{scope}[xshift=4.7cm]
  \node[vertex,label=left:$u$]  (hU) at (-1.2,0) {};
  \node[vertex,label=right:$v$] (hV) at ( 1.2,0) {};
  \node[vertex,label=above:$x$] (hX) at (0,1) {};
  \node[vertex,label=below:$y$] (hY) at (0,-1) {};
  \draw[boundary] (hU)--(hX)--(hV)--(hY)--cycle;
  \draw[newedge] (hX)--(hY);
  \node at (0,-1.78) {$G^e=C+xy$};
\end{scope}

\draw[->] (-3.25,.55)--(-1.45,.55)
  node[midway,above,font=\scriptsize]{delete $uv$};
\draw[->] (1.45,.55)--(3.25,.55)
  node[midway,above,font=\scriptsize]{add $xy$};
\end{tikzpicture}
\caption{Deleting a flippable edge opens one quadrilateral.  Because the
resulting card is three-connected, its spherical embedding is unique.  The
original and flipped diagonals therefore give its only maximal-planar
parents.  Only the local part of the embedding is shown.}
\label{fig:flip}
\end{figure}

\begin{lemma}[Two-parent lemma]\label{lem:two-parent}
If \(e\) is a flippable edge of a maximal planar graph \(G\), then \(G-e\)
is three-connected.  Its only maximal-planar parents are \(G\) and the flip
mate \(G^e\), up to isomorphism.
\end{lemma}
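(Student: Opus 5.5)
The plan is to prove the two assertions separately: first three-connectivity by a direct cut argument, then the parent statement by combining Whitney's uniqueness of embeddings with an edge count. Throughout, write \(C=G-e\) and let \(Q=u\,x\,v\,y\) be the quadrilateral face that \(C\) inherits from the two facial triangles \(uvx\) and \(uvy\).

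\emph{Preliminaries.} Since \(e\) is flippable we have \(x\neq y\) and \(xy\notin E(G)\). This excludes \(K_3\) and \(K_4\), so \(n\ge 5\). It is standard that a maximal planar graph on at least four vertices is three-connected, so \(G\) is. In the embedding of \(C\) inherited from \(G\), every face is a triangle of \(G\) except the single face bounded by \(Q\).

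\emph{Three-connectivity.} Suppose \(\{a,b\}\) separates \(C\).
\begin{itemize}[nosep]
\item If \(u\in\{a,b\}\) or \(v\in\{a,b\}\), then \(C-\{a,b\}=G-\{a,b\}\), which is connected because \(G\) is three-connected. Hence \(u,v\notin\{a,b\}\).
\item Since \(G-\{a,b\}\) is connected but \(C-\{a,b\}\) is not, the edge \(uv\) is a bridge of \(G-\{a,b\}\) with \(u\) and \(v\) in different components of \(C-\{a,b\}\).
\item The paths \(u\,x\,v\) and \(u\,y\,v\) lie in \(C\), so each must be broken. This forces \(\{a,b\}=\{x,y\}\).
\end{itemize}
It remains to show that \(\{x,y\}\) does not separate \(u\) from \(v\) in \(C\). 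I would use the planar cut picture. A two-vertex cut of a connected plane graph is witnessed by a simple closed curve that meets the graph only at \(x\) and \(y\). Such a curve consists of two arcs from \(x\) to \(y\), and each arc lies inside a face whose boundary contains both \(x\) and \(y\). A triangular face containing both \(x\) and \(y\) would contain the edge \(xy\), which does not exist. So both arcs lie in the face bounded by \(Q\). A closed curve inside the closure of a single face encloses no vertex of \(C\), so it cannot separate \(u\) from \(v\). This is a contradiction.

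An alternative that avoids curves: \(uv\) being a bridge of \(G-\{x,y\}\) means \(\{x,y,u\}\) and \(\{x,y,v\}\) are separating triples. One would then argue that every separating triangle-free triple in a triangulation must be pairwise adjacent, which is again impossible because \(xy\notin E(G)\). I expect this step to be the main technical point and would present the curve version.

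\emph{Parents.} Let \(H\in\M\) with \(H-f\cong C\). By Whitney's theorem, the three-connected planar graph \(C\) has a unique spherical embedding up to homeomorphism. Its faces are therefore, combinatorially, the triangles together with the one quadrilateral \(Q\). The embedding of \(H\) restricts to an embedding of \(H-f\cong C\), so via the isomorphism we may regard it as this unique embedding. The edge \(f\) is drawn inside one of its faces, and since \(H\) is simple, \(f\) joins two nonadjacent vertices of that face. A triangle has no such pair, so \(f\) is a diagonal of \(Q\): either \(uv\), giving \(H\cong G\), or \(xy\), giving \(H\cong G^e\).

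Finally, \(G^e\) is simple because \(xy\notin E(G)\). It is maximal planar because it has \(3n-6\) edges and is drawn with every face a triangle.
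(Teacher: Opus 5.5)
Your proposal is correct. The parent half is the paper's argument: Whitney's theorem for the three-connected card, after which the inserted edge must be a diagonal of the unique quadrilateral face. You read that face structure directly off the embedding inherited from \(G\), whereas the paper recovers it by an Euler count; the two come to the same thing.

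The three-connectivity half takes a genuinely different route. The paper never touches the embedding there. It uses that both completions \(G=C+uv\) and \(G^e=C+xy\) are maximal planar, hence three-connected. So a separator \(S\) with \(|S|\le 2\) must avoid \(u,v\) (because \(G-S\) is connected) and also \(x,y\) (because \(G^e-S\) is connected). The path \(u\,x\,v\) then survives in \(C-S\), a contradiction.

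You instead use only the three-connectivity of \(G\) to force \(S=\{x,y\}\), and then dispose of that case topologically, by a separating closed curve through faces containing both \(x\) and \(y\). This works, and it explains geometrically why the cut cannot exist: no triangular face contains both \(x\) and \(y\), and a curve inside the closed quadrilateral face encloses no vertex. However, it rests on the standard but nontrivial fact that a two-cut of a two-connected plane graph is realised by such a curve, which you would have to cite or prove.

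You already show at the end that \(G^e\) is maximal planar. Since \(C-\{x,y\}=G^e-\{x,y\}\), your remaining case closes in one line by the same symmetry the paper exploits, with no topology at all. The alternative you sketch via separating triples is not needed, and as phrased it is not quite right, so drop it.
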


The lemma turns reconstruction into a two-candidate problem.  If the two
parents in \cref{fig:flip} are isomorphic, the first card already identifies
\(G\).  Otherwise it remains only to choose a second card of \(G\) that is
absent from the deck of \(G^e\), or to use a multiplicity that \(G^e\) does
not have.

The proof, including the three-connectivity point that makes the statement
about parents rigorous, is in \cref{app:two-parent}.

\section{Road map of the proof}

Every maximal planar graph has minimum degree at least three, and Euler's
formula guarantees a vertex of degree at most five.  The proof therefore has
two parts.

\subsection{Minimum degree at least four}

Let \(\delta\) be the minimum degree.  Here \(\delta\) is four or five.  At a
minimum-degree vertex, an incident edge fails to be flippable precisely when
a chord between two consecutive-around-the-vertex neighbours blocks the
flip.  Those blocking chords lie in the disk complementary to the closed
star of the vertex and cannot cross.  A quadrilateral admits at most one
such diagonal, and a pentagon at most two.  Consequently a degree-four
vertex has at least two flippable incident edges, and a degree-five vertex
has at least three; see \cref{fig:blockers}.

\begin{figure}[htbp]
\centering
\begin{tikzpicture}[
  vtx/.style={circle,fill=black,inner sep=1.5pt},
  spoke/.style={black!70,line width=.65pt},
  cycle/.style={black,line width=.75pt},
  blocker/.style={oldred,very thick,dashed}
]
\begin{scope}[xshift=-2.8cm]
  \node[vtx,label=below:$v$] (v4) at (0,0) {};
  \foreach \i/\ang in {0/90,1/0,2/270,3/180}{
    \node[vtx] (q\i) at (\ang:1.25) {};
    \draw[spoke] (v4)--(q\i);
  }
  \draw[cycle] (q0)--(q1)--(q2)--(q3)--cycle;
  \draw[blocker] (q0) to[out=28,in=-28,looseness=1.65] (q2);
  \node[align=center,font=\small] at (0,-1.8)
    {degree four:\\one chord blocks two spokes};
\end{scope}
\begin{scope}[xshift=2.8cm]
  \node[vtx,label=below:$v$] (v5) at (0,0) {};
  \foreach \i/\ang in {0/90,1/18,2/-54,3/-126,4/162}{
    \node[vtx] (p\i) at (\ang:1.3) {};
    \draw[spoke] (v5)--(p\i);
  }
  \draw[cycle] (p0)--(p1)--(p2)--(p3)--(p4)--cycle;
  \draw[blocker] (p0) to[out=25,in=5,looseness=1.5] (p2);
  \draw[blocker] (p0) to[out=155,in=175,looseness=1.5] (p3);
  \node[align=center,font=\small] at (0,-1.8)
    {degree five:\\at most two noncrossing blockers};
\end{scope}
\end{tikzpicture}
\caption{The link cycle around a minimum-degree vertex.  A dashed chord
blocks the spoke whose two facial opposite vertices it joins.  Noncrossing
of the blocker chords leaves at least two flippable spokes at degree four and
at least three at degree five.}
\label{fig:blockers}
\end{figure}

After a suitable flip, the old diagonal's endpoints lose one degree and the
new diagonal's endpoints gain one.  According to whether vertices above the
minimum degree contain an edge, form a nonempty independent set, or do not
exist, one chooses a second deletion whose low-degree signature cannot occur
in the flip mate.  In the regular case the two selected cards may be
isomorphic; multiplicity then excludes the rival.

\begin{proposition}[The noncubic case]\label{prop:high-degree}
A maximal planar graph of minimum degree at least four is identified by at
most two selected edge cards.
\end{proposition}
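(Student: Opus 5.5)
We will reduce the problem to a two-candidate comparison and then separate the two candidates by degree data. Let \(G\in\M\) have minimum degree \(\delta\in\{4,5\}\), and let \(v\) be a vertex of degree \(\delta\). By the blocker count above, \(v\) has a flippable spoke \(e=vw\). The first selected card is \(C=G-e\). By \cref{lem:two-parent}, its only maximal-planar parents are \(G\) and \(G^e\). If \(G^e\cong G\), the single card \(C\) already identifies \(G\) and we stop. Otherwise we must find a second card \(G-f\), or a second copy of \(C\), that is not available in \(\ED(G^e)\) with the required multiplicity.

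The tool for separating them is the degree sequence of an edge card. A card \(G-f\) has the degree sequence of \(G\) with the degrees of the two endpoints of \(f\) each lowered by one. Any \(H\in\M\) on the same \(n\) vertices has \(3n-6\) edges. So a card \(X\) determines the multiset of degrees of \(H\) up to which two vertices of \(X\) receive \(+1\).

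In \(G^e=G-vw+xy\) the degrees of \(v,w\) drop by one and those of \(x,y\) rise by one. In particular \(v\) has degree \(\delta-1\) in \(G^e\), so \(G^e\) has minimum degree \(\delta-1\), which \(G\) does not have. A card of \(G^e\) therefore always contains a vertex of degree at most \(\delta-1\). More precisely, it contains either a vertex of degree \(\delta-2\) or at least one vertex of degree \(\delta-1\) coming from \(v\), besides vertices arising from the deleted edge.

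We then choose the second card \(G-f\) so that its low-degree signature is impossible for a card of \(G^e\). We compare the counts \(N_{\delta-1}\) and \(N_{\delta-2}\) of vertices of degree \(\delta-1\) and \(\delta-2\) in the card.

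Case 1: some edge \(f\) has both endpoints of degree greater than \(\delta\). Then \(G-f\) has no vertex of degree below \(\delta\). Every card of \(G^e\) has one, since \(v\) already has degree \(\delta-1\) in \(G^e\). So \(\{C,G-f\}\) identifies \(G\).

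Case 2: the vertices of degree greater than \(\delta\) form a nonempty independent set. Take an edge \(f\) joining a high vertex \(h\) to a vertex of degree \(\delta\). Then \(G-f\) has exactly one vertex of degree \(\delta-1\) and none lower. A card of \(G^e\) with that signature must delete an edge of \(G^e\) whose endpoints are \(v\) (or another degree-\((\delta-1)\) vertex of \(G^e\)) and a vertex of degree above \(\delta\) in \(G^e\). Each such possibility forces the degree multiset of \(G^e\) to match that of \(G\) after the transfer. We compare the number of vertices of each degree above \(\delta\): \(G^e\) raised \(x,y\) and lowered \(w\), and the identity \(\sum_u(\deg u-6)=-12\) is preserved. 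A short bookkeeping argument should show the multisets cannot match. Where they might, we choose \(e\) among the several flippable spokes (at least two, or three) so that \(x,y\) are not both of degree \(\delta\).

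Case 3: \(G\) is \(\delta\)-regular, so \(G\) is the octahedron or the icosahedron. Here every card has exactly two vertices of degree \(\delta-1\). A card of \(G^e\) with this signature must delete the edge \(vw\) itself, which is absent from \(G^e\), or the unique edge joining the two degree-\((\delta-1)\) vertices of \(G^e\). The other rise at \(x,y\) means such a card also has vertices of degree \(\delta+1\) unless the deleted edge is \(xy\), which returns \(C\). Hence at most one card of \(G^e\) is isomorphic to \(C\). Selecting \(C\) twice, which is allowed under \cref{rem:multiset} since \(G\) is edge-transitive and has many copies, excludes \(G^e\).

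The main obstacle is Case 2, where careless choices of \(e\) could make degree multisets of \(G\)-cards and \(G^e\)-cards coincide. I would first handle it by choosing \(e\) so that the flip raises a high vertex. If that fails, I would pick \(f\) at \(v\)'s neighbourhood so that the card records the positions of degree-\(\delta-1\) vertices relative to \(C\).
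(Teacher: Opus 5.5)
Your Cases 1 and 3 are fine and agree with the paper. In Case 3, the two degree-\((\delta-1)\) vertices \(v,w\) of \(G^e\) are nonadjacent, so there is no edge joining them. Your conclusion is still right: \(G^e-xy\) is the only card of \(G^e\) with signature \((\delta-1)^2\delta^{n-2}\). You also do not need edge-transitivity, since every card of a regular \(G\) has that signature, so any \(G-g\) with \(g\ne e\) works as the second card.

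The genuine gap is Case 2. It comes from fixing \(e=vw\) as an arbitrary flippable spoke at \(v\). If \(w\) has degree greater than \(\delta\), then \(C=G-vw\) itself has exactly one vertex of degree \(\delta-1\) and none lower. Moreover \(C=G^e-xy\) is a card of \(G^e\). So the signature you want to use for \(G-f\) always occurs in \(\ED(G^e)\). Worse, \(G-f\) and \(C\) have identical degree sequences whenever \(d(h)=d(w)\), for instance whenever all high vertices share one degree.

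Your description of the matching cards of \(G^e\) is also wrong. Deleting an edge at \(v\) creates a vertex of degree \(\delta-2\), so it never yields that signature. The matching cards come from deleting edges that avoid \(v\) and have both endpoints of degree at least \(\delta+1\) in \(G^e\). The edge \(xy\) is always such an edge, because both \(x\) and \(y\) were raised by the flip. Consequently your proposed repairs do not address the obstruction: choosing \(e\) so that \(x,y\) are not both of degree \(\delta\), or so that the flip raises a high vertex, leaves \(G^e-xy=C\) untouched. No ``short bookkeeping argument'' on degree sequences can close this.

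The missing idea is to flip an edge whose \emph{both} endpoints have minimum degree. The paper proves (\cref{lem:low-low-flip}) that such a flippable edge \(e=uv\) exists whenever the high-degree vertices form a nonempty independent set. No two high vertices are consecutive on the link of a minimum-degree vertex. Hence at least three spokes go to minimum-degree vertices, except in degree four when the link has two opposite high vertices. Three such spokes cannot all be blocked by noncrossing chords. In the exceptional degree-four configuration, the common blocker of the two low spokes would join the two high vertices, contradicting independence.

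With this choice, \(u\) and \(v\) are the only vertices of degree \(\delta-1\) in \(G^e\), and they are nonadjacent there. Every card of \(G^e\) therefore either keeps both at degree \(\delta-1\) or lowers one of them to \(\delta-2\). So no card of \(G^e\) matches the signature of \(G-f\), where \(f\) joins a minimum-degree vertex to a high vertex. Without this step your Case 2 remains unproved.
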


The complete three-case argument is in \cref{app:high-degree}.

\subsection{A vertex of degree three}

Let \(v\) have neighbours \(a,b,c\).  After the small graphs are removed,
the three edges \(ab,bc,ca\) are all flippable.  Suppose, for contradiction,
that a chosen boundary card and every possible second card also occur in its
flip mate.  Comparing the degree sequences before and after deletion forces
one fixed degree class to meet every edge of \(G\).  The possible residual
degree balances then have only four outcomes: regularity, bipartiteness, the
unique five-vertex triangulation, or a locally impossible neighbourhood.
Each conflicts with the standing counterexample.

The calculation is kept out of the main text.  \Cref{app:degree-bookkeeping}
gives the degree-transfer identity, and \cref{app:cubic} applies it to the
three edges surrounding \(v\).

\begin{proposition}[The cubic case]\label{prop:cubic}
A maximal planar graph containing a vertex of degree three is identified by
at most two selected edge cards.
\end{proposition}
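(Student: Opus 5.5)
The plan is to reduce everything to the two-candidate situation of \cref{lem:two-parent} and then exclude the rival by counting degree data. Let \(v\) have degree three with neighbours \(a,b,c\). The small cases are checked by hand: \(K_4\), which has no flippable edge but whose cards identify it directly, and the five-vertex triangulation, where a single card suffices. Once these are removed, each of \(ab,bc,ca\) is flippable. The reason is that the faces across \(ab\) are \(vab\) and \(abw\) for some \(w\neq v\), and \(vw\notin E(G)\) because \(\deg v=3\) and \(w\notin\{a,b,c\}\) once \(n\ge 5\). Flipping \(ab\) produces \(G^{ab}=G-ab+vw\). In this graph \(v\) has degree four, \(a\) and \(b\) each lose one, and \(w\) gains one.

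Take the first card to be \(G-ab\). By \cref{lem:two-parent} its only maximal-planar parents are \(G\) and \(G^{ab}\). If these are isomorphic we are done with one card. Otherwise, suppose for contradiction that every choice of second card \(G-f\) (respecting multiplicity, as in \cref{rem:multiset}) also lies in \(\ED(G^{ab})\), together with \(G-ab\) itself.

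\emph{Degree-transfer step.} Since the degree sequences of \(G\) and \(G^{ab}\) differ by the explicit transfer \(\{a,b\}\to\{v,w\}\), I would write down the identity that governs which multisets of degree sequences can occur among cards. The degree multiset of \(G-f\), for \(f=xy\), is \(D(G)\) with \(\deg x\) and \(\deg y\) each lowered by one. It therefore determines the unordered pair \(\{\deg x,\deg y\}\) of the deleted edge. Matching this against the cards of \(G^{ab}\), whose degree multiset differs from \(D(G)\) by the transfer, and using that \emph{every} edge card of \(G\) occurs with at least its multiplicity in \(\ED(G^{ab})\), yields an equality of edge-type counts \(m_{ij}\). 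Here \(m_{ij}\) is the number of edges joining degree \(i\) to degree \(j\), and the count is taken in \(G\) after the shift \(D(G)\mapsto D(G^{ab})\).

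Since \(D(G)\ne D(G^{ab})\) unless \(\{\deg a,\deg b\}=\{3+1,\deg w+1\}\) as multisets, I would split on whether the degree multisets coincide. If they differ, no card of \(G\) can match a card of \(G^{ab}\) unless the deleted-edge degrees exactly compensate the transfer. This forces every edge of \(G\) to be incident to a fixed degree class, namely the one needed to absorb the shift. Repeating the argument with the symmetric first cards \(G-bc\) and \(G-ca\) pins the degrees of \(a,b,c,w\) further. The surviving balances should be that \(G\) is regular (impossible, since \(\deg v=3\) and \(G\neq K_4\)), that \(G\) is bipartite between two degree classes (impossible, since \(vab\) is a triangle), that \(G\) is the five-vertex triangulation (already excluded), or a local configuration around \(v\) in which the triangle \(abc\) and the degree constraints conflict.

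\emph{Main obstacle.} The hard part is the bookkeeping in the case where the degree multisets agree, so that degree data alone does not separate \(G\) from \(G^{ab}\) and multiplicities of edge types must be used. There I would first try to exploit the freedom of choosing the first card among the three flips around \(v\), together with the edge \(va\), whose card has a degree-two vertex. A card with a degree-two vertex can only arise in \(G^{ab}\) from an edge at a degree-three vertex of \(G^{ab}\). Counting degree-three vertices, which change because \(v\) becomes degree four, should supply the contradiction.
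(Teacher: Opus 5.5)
\textbf{Genuine gap.} Your frame matches the paper's: the first card \(G-ab\), the two-parent lemma, a degree-transfer comparison, and the list of terminal outcomes. But the proposal stops at that road map, and the steps that carry the proof are either missing or would not work.

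First, the assumption that no pair \(\{\!\{G-ab,G-g\}\!\}\) identifies \(G\) gives less than you claim. It only says that every card type of \(G\) occurs in \(\ED(G^{ab})\) at least once (twice for the type of \(G-ab\) when that type is repeated). It does not give full multiplicities. Full multiplicities would force \(\ED(G)=\ED(G^{ab})\), since both decks have \(3n-6\) cards, and hence equal degree sequences, which would make your ``multisets differ'' branch vacuous. So no equality of edge-type counts \(m_{ij}\) is available. What you do get is a per-edge statement: for every \(g\in E(G)\) there is \(h\in E(G^{ab})\) with \(A_G(g)-A_{G^{ab}}(h)=Q\), where \(Q=\eps_{d(a)}+\eps_{d(b)}-\eps_4-\eps_{d(w)+1}\). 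After cancellation this yields one of three outcomes: a fixed endpoint-degree pair on every edge (mass two, giving bipartite or regular), a degree class covering every edge (mass one), or \(Q=0\).

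Second, your degree-three count does not close the case you single out as the main obstacle, \(Q=0\). If \(Q=0\), then \(\{d(a),d(b)\}=\{4,d(w)+1\}\) and all degree counts of \(G\) and \(G^{ab}\) agree. If, say, \(d(a)=4\), the edge \(va\) of \(G^{ab}\) has endpoint degrees \(\{4,3\}\), which is exactly what \(G-va\) requires. So this flip alone gives no contradiction.

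The missing idea is to avoid this case rather than fight it. If \(Q_f=0\) for all three \(f\in\{ab,bc,ca\}\), then each of \(\{d(a),d(b)\},\{d(b),d(c)\},\{d(c),d(a)\}\) contains \(4\). So two neighbours of \(v\) have degree four and become adjacent cubic vertices of the triangulation \(G-v\), forcing \(G-v=K_4\) and \(n=5\). Hence some boundary edge has mass one.

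Even then the real work remains, and ``pins the degrees further'' and ``local configuration should conflict'' only gesture at it. The steps needed are:
\begin{enumerate}
\item \(p\ne3\), since an independent vertex cover would make \(G\) bipartite. Since \(v\) is not of degree \(p\), this forces \(d(a)=d(b)=d(c)=p\).
\item \(p\ne4\), by the same adjacent-cubic argument in \(G-v\).
\item Every boundary edge then has residual \(\eps_p-\eps_4\), with \(d(z_{xy})=p-1\).
\item \(p=5\) fails: the \(c\)-to-\(b\) arc of the link of \(a\) starts and ends at the degree-four vertices \(z_{ac},z_{ab}\). These cannot be adjacent, so the link has length \(4\) or at least \(6\).
\item \(p\ge6\) fails: \(G-va\) would need a degree-three--degree-four edge in \(G^{ab}\). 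None exists, because the cubic vertices there are old cubic vertices whose neighbours have degree at least \(p-1\), and \(v\)'s neighbours have degrees \(p-1,p-1,p,p\).
\end{enumerate}
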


\section{Sharpness and one-card reconstruction}

\subsection{The smallest transparent one-card example}

\begin{figure}[htbp]
\centering
\begin{tikzpicture}[
  vertex/.style={circle,fill=black,inner sep=1.8pt},
  edge/.style={line width=.8pt}
]
\node[vertex,label=above:$v$]       (v) at (0,1.8) {};
\node[vertex,label=below left:$a$]  (a) at (-1.8,-1) {};
\node[vertex,label=below right:$b$] (b) at (1.8,-1) {};
\node[vertex,label=above left:$c$]  (c) at (0,.15) {};
\node[vertex,label=below:$z$]       (z) at (0,-.58) {};

\draw[edge] (v)--(a)--(b)--cycle;
\draw[edge] (c)--(v);
\draw[edge] (c)--(a);
\draw[edge] (c)--(b);
\draw[edge] (z)--(a);
\draw[edge] (z)--(b);
\draw[edge] (z)--(c);
\draw[oldred,densely dashed,line width=1pt]
  (v) to[out=18,in=12,looseness=1.35] (z);
\node[oldred,fill=white,inner sep=1pt,font=\small] at (1.05,.72)
  {missing};
\end{tikzpicture}
\caption{The graph \(K_5-vz\), the unique maximal planar graph on five
vertices.  An edge card retains five vertices, so every maximal-planar parent
of that card is isomorphic to this graph.  One selected card therefore
suffices.  The dashed curve marks the sole nonedge and is not part of the
graph.}
\label{fig:one-card}
\end{figure}

\begin{proposition}\label{prop:k5minus}
The unique five-vertex maximal planar graph \(K_5-e\) has class
edge-reconstruction number one.
\end{proposition}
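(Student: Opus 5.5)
The plan is to argue directly from the definition of \(\Cern_{\M}\), following the reasoning indicated in \cref{fig:one-card}. First I will fix notation. Let \(G=K_5-vz\), which has \(9\) edges, and choose any edge \(f\) of \(G\), say \(f=ab\). Take \(S=\{\!\{G-f\}\!\}\), a single card; it is trivially a sub-multiset of \(\ED(G)\). The main step is to show that \(S\) identifies \(G\) within \(\M\). So let \(H\in\M\) with \(S\preccurlyeq\ED(H)\). Then there is an edge \(g\in E(H)\) with \(H-g\cong G-f\).

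The key observation is that deleting an edge keeps every vertex. Hence \(|V(H)|=|V(H-g)|=|V(G-f)|=5\). I will then need that \(K_5-e\) is the unique maximal planar graph on five vertices, as the statement asserts. To justify this: a maximal planar graph on \(n\ge 3\) vertices has \(3n-6\) edges, which is \(9\) for \(n=5\). A simple graph on five vertices with nine edges is \(K_5\) minus one edge, and all such graphs are isomorphic. Therefore \(H\cong K_5-e\cong G\), which is exactly the identification condition.

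It remains to rule out \(\Cern_{\M}(G)=0\). With the empty subdeck, every \(H\in\M\) satisfies \(\emptyset\preccurlyeq\ED(H)\). For instance, \(K_4\) and the octahedron would both have to be isomorphic to \(G\), which is false. So the minimum size is exactly one.

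I do not expect a real obstacle. The only point needing care is the edge count \(3n-6\), which follows from Euler's formula for triangulations with \(n\ge3\) and should be cited as standard. One might also note, as a consistency check, that \cref{lem:two-parent} does not need to be invoked here.
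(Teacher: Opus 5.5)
Your proof is correct and is essentially the paper's argument: the paper obtains this as the order-five case of \cref{lem:small-orders}, using exactly that an edge card keeps all five vertices and that \(|E|=3n-6\) forces every maximal-planar parent to be the unique five-vertex triangulation. Your explicit exclusion of the empty subdeck only spells out what the paper treats as holding by definition.
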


The example is not a small-order accident.  Let \(B_m\) be the bipyramid over
the rim cycle \(r_0r_1\dots r_{m-1}r_0\): two nonadjacent hubs \(p,q\) are
adjacent to every rim vertex.  Insert a new vertex \(w\) in the face
\(pr_0r_1\) and join it to the three corners, obtaining \(J_m\); see
\cref{fig:infinite-family}.

\begin{figure}[htbp]
\centering
\begin{tikzpicture}[
  vertex/.style={circle,fill=black,inner sep=1.6pt},
  edge/.style={line width=.75pt},
  oldedge/.style={oldred,very thick},
  missing/.style={oldred,very thick,dashed},
  newedge/.style={newblue,very thick,dotted},
  >=Stealth
]
% Only the neighbourhood of the rim edge r_0r_1 is shown in each panel.
\begin{scope}[xshift=-4.6cm]
  \node[vertex,label=left:$r_0$] (Lr0) at (-1,0) {};
  \node[vertex,label=right:$r_1$] (Lr1) at (1,0) {};
  \node[vertex,label=above:$p$] (Lp) at (0,1.35) {};
  \node[vertex,label=below:$q$] (Lq) at (0,-1.15) {};
  \node[vertex,label=right:$w$] (Lw) at (0,.52) {};
  \draw[edge] (Lr0)--(Lr1) (Lr0)--(Lp)--(Lr1)
              (Lr0)--(Lq)--(Lr1) (Lw)--(Lr0) (Lw)--(Lr1);
  \draw[oldedge] (Lw)--(Lp);
  \node at (0,-1.72) {$J_m=X+wp$};
\end{scope}
\begin{scope}
  \node[vertex,label=left:$r_0$] (Mr0) at (-1,0) {};
  \node[vertex,label=right:$r_1$] (Mr1) at (1,0) {};
  \node[vertex,label=above:$p$] (Mp) at (0,1.35) {};
  \node[vertex,label=below:$q$] (Mq) at (0,-1.15) {};
  \node[vertex,label=right:$w$] (Mw) at (0,.52) {};
  \draw[edge] (Mr0)--(Mr1) (Mr0)--(Mp)--(Mr1)
              (Mr0)--(Mq)--(Mr1) (Mw)--(Mr0) (Mw)--(Mr1);
  \draw[missing] (Mw)--(Mp);
  \node at (0,-1.72) {$X=J_m-wp$};
\end{scope}
\begin{scope}[xshift=4.6cm]
  \node[vertex,label=left:$r_0$] (Rr0) at (-1,0) {};
  \node[vertex,label=right:$r_1$] (Rr1) at (1,0) {};
  \node[vertex,label=above:$q$] (Rq) at (0,1.35) {};
  \node[vertex,label=below:$p$] (Rp) at (0,-1.15) {};
  \node[vertex,label=right:$w$] (Rw) at (0,.52) {};
  \draw[edge] (Rr0)--(Rr1) (Rr0)--(Rq)--(Rr1)
              (Rr0)--(Rp)--(Rr1) (Rw)--(Rr0) (Rw)--(Rr1);
  \draw[newedge] (Rw)--(Rq);
  \node at (0,-1.72) {$X+wq\cong J_m$};
\end{scope}
\draw[->] (-3.12,.65)--(-1.48,.65)
  node[midway,above,font=\scriptsize]{delete $wp$};
\draw[->] (1.48,.65)--(3.12,.65)
  node[midway,above,font=\scriptsize]{add $wq$};
\end{tikzpicture}
\caption{The local picture for the infinite one-card family; the rest of the
bipyramid is unchanged and is not shown.  In the middle card, \(w\) has
degree two.  A maximal-planar completion must join it to one of the two hubs.
The left and right completions are isomorphic under the hub swap
\(p\leftrightarrow q\), so the middle card identifies \(J_m\).  The dashed
red segment marks the deleted edge; the dotted blue segment marks the
alternative completion.}
\label{fig:infinite-family}
\end{figure}

\begin{proposition}[Infinitely many one-card graphs]\label{prop:infinite-one-card}
For every \(m\ge4\), the maximal planar graph \(J_m\) has class
edge-reconstruction number one.
\end{proposition}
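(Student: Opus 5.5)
The plan is to use the card \(X=J_m-wp\) and apply \cref{prop:one-card-test}: it suffices to show that every maximal planar graph \(H\) with \(H-f\cong X\) for some edge \(f\) is isomorphic to \(J_m\).

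First I record the structure of \(X\). The graph \(J_m\) has \(m+3\) vertices and \(3(m+3)-6=3m+3\) edges, so \(X\) has \(3m+2\) edges, exactly one short of maximal planarity. Any maximal-planar parent \(H\) is therefore \(X+f\) for a single nonedge \(f\) of \(X\) whose addition keeps the graph planar. In \(X\) the vertex \(w\) has degree two, with neighbours \(r_0,r_1\). Since a maximal planar graph on at least four vertices has minimum degree at least three, the added edge \(f\) must be incident with \(w\). Thus \(f=wz\) for some \(z\notin\{r_0,r_1,w\}\).

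The main step is to pin down \(z\). I want to show \(z\in\{p,q\}\) using the rigidity of the embedding. Deleting \(w\) from \(X\) leaves the bipyramid \(B_m\), which is a three-connected triangulation and so has a unique spherical embedding. In \(X+wz\), deleting \(w\) likewise leaves \(B_m\), and \(w\) must lie inside a face of \(B_m\) containing \(r_0\), \(r_1\) and \(z\). The faces of \(B_m\) containing the edge \(r_0r_1\) are exactly \(pr_0r_1\) and \(qr_0r_1\), so \(z=p\) or \(z=q\). Alternatively, one can argue that \(X+wz\) maximal planar forces \(w\) to have degree three with its neighbourhood a triangle, so \(z\) is a common neighbour of \(r_0\) and \(r_1\). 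For \(m\ge4\) the only common neighbours of \(r_0\) and \(r_1\) in \(X\) other than \(w\) are \(p\) and \(q\), since the other rim vertices adjacent to \(r_0\) or \(r_1\) are \(r_{m-1}\) and \(r_2\), which are distinct when \(m\ge4\). I will use this second argument, because it avoids embedding uniqueness: in a maximal planar graph on at least five vertices, the link of a degree-three vertex is a triangle.

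Finally, \(X+wp=J_m\), and \(X+wq\) is the image of \(J_m\) under the automorphism of \(X\) that swaps \(p\) and \(q\) and fixes every rim vertex and \(w\). To justify this, note that \(p\) and \(q\) have identical neighbourhoods \(\{r_0,\dots,r_{m-1}\}\) in \(X\), because \(wp\) was deleted. Hence \(X+wq\cong J_m\), so every maximal-planar parent of \(X\) is isomorphic to \(J_m\), and \cref{prop:one-card-test} gives \(\Cern_{\M}(J_m)=1\). The only delicate point is the claim that \(z\) must be a common neighbour of \(r_0\) and \(r_1\), together with the check that \(m\ge4\) excludes rim vertices. When \(m=3\), \(r_2\) is adjacent to both \(r_0\) and \(r_1\), which is why that case is excluded or must be treated separately.
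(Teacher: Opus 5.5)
Your proof is correct and follows the paper's argument essentially step for step. Both use the card \(X=J_m-wp\); the degree-two vertex \(w\) forces the added edge to be incident with \(w\); the triangular link of a cubic vertex makes the new neighbour a common neighbour of \(r_0,r_1\), hence \(p\) or \(q\) when \(m\ge4\); and the \(p\leftrightarrow q\) swap together with \cref{prop:one-card-test} finishes the proof. Your justification of the swap as an automorphism of \(X\) itself, via \(N_X(p)=N_X(q)\), is if anything slightly cleaner than the paper's reference to an automorphism of \(B_m\) that fixes \(w\).
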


\FloatBarrier
\subsection{Why the bound two is sharp}

Let \(O=K_{2,2,2}\), the octahedral graph.  It is four-regular and
edge-transitive.  Deleting any edge gives the same card type \(C\).

\begin{figure}[htbp]
\centering
\begin{tikzpicture}[
  vertex/.style={circle,fill=black,inner sep=1.65pt},
  edge/.style={line width=.7pt},
  oldedge/.style={oldred,very thick},
  newedge/.style={newblue,very thick,dashed}
]
% Common planar embedding: outer triangle A B C, inner triangle a b c.
\begin{scope}[xshift=-3.3cm]
  \node[vertex,label=above:$A$]       (OA) at (0,1.8) {};
  \node[vertex,label=below left:$B$]  (OB) at (-1.7,-1) {};
  \node[vertex,label=below right:$C$] (OC) at (1.7,-1) {};
  \node[vertex,label=below:$a$]       (Oa) at (0,-.55) {};
  \node[vertex,label=right:$b$]       (Ob) at (.72,.25) {};
  \node[vertex,label=left:$c$]        (Oc) at (-.72,.25) {};
  \draw[edge] (OA)--(OB)--(OC)--cycle;
  \draw[edge] (Oa)--(Ob)--(Oc)--cycle;
  \draw[edge] (Oa)--(OB) (Oa)--(OC);
  \draw[edge] (Ob)--(OA) (Ob)--(OC);
  \draw[edge] (Oc)--(OA) (Oc)--(OB);
  \draw[oldedge] (OA)--(OB);
  \node at (0,-1.55) {octahedron (O)};
\end{scope}

\begin{scope}[xshift=3.3cm]
  % A straight-line planar embedding of the flip mate.
  \node[vertex,label=below left:$A$]  (HA) at (-1.75,-.9) {};
  \node[vertex,label={[xshift=3pt,yshift=-4pt]right:$B$}] (HB) at (.15,.1) {};
  \node[vertex,label=below right:$C$] (HC) at (1.85,-.9) {};
  \node[vertex,label={[xshift=3pt,yshift=2pt]right:$a$}]   (Ha) at (.15,.62) {};
  \node[vertex,label=above:$b$]       (Hb) at (.15,1.38) {};
  \node[vertex,label=above left:$c$]  (Hc) at (-.75,-.35) {};
  \draw[edge] (HA)--(HC) (HA)--(Hb) (HA)--(Hc);
  \draw[edge] (HB)--(HC) (HB)--(Ha) (HB)--(Hc);
  \draw[edge] (HC)--(Ha) (HC)--(Hb);
  \draw[edge] (Ha)--(Hb) (Ha)--(Hc) (Hb)--(Hc);
  \draw[newedge] (HC)--(Hc);
  \node at (0,-1.55) {flip mate $H$};
\end{scope}
\node[align=center,font=\small] at (0,-2.2)
  {$H=O-AB+Cc$ and $O-AB=H-Cc$};
\end{tikzpicture}
\caption{One card does not identify the octahedron.  The solid red edge
\(AB\) and dashed blue edge \(Cc\) are the two diagonals of the same opened
quadrilateral, so their deletions give the same card.  The octahedron has
degree sequence \(4^6\), whereas its flip mate has degree sequence
\(3^2,4^2,5^2\).  Only deletion of the flip mate's degree-five--degree-five
edge gives a card with degree sequence \(3^2,4^4\); hence the rival contains
only one copy of the common card.  Two copies identify the octahedron.}
\label{fig:octahedron}
\end{figure}

\begin{proposition}[Sharpness]\label{prop:octahedron}
The octahedral graph has class edge-reconstruction number two.
\end{proposition}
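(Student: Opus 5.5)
The plan has two halves: an upper bound and a lower bound. For the upper bound, Proposition~\ref{prop:high-degree} already gives \(\Cern_{\M}(O)\le 2\), since \(O\) has minimum degree four. To keep the example self-contained, I would also check directly that two copies of the card \(C=O-AB\) identify \(O\).

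\emph{Legitimacy of the two-copy subdeck.} Because \(O\) is edge-transitive, all twelve edge cards are isomorphic to \(C\). The subdeck \(\{\!\{C,C\}\!\}\) is therefore a legitimate selection under \cref{rem:multiset}.

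\emph{Only two candidate parents.} Suppose \(H'\in\M\) satisfies \(\{\!\{C,C\}\!\}\preccurlyeq\ED(H')\). Then \(H'\) is a maximal-planar parent of \(C\). The edge \(AB\) is flippable, because its facial third vertices \(c\) and \(C\) are nonadjacent in \(O\). So \cref{lem:two-parent} gives \(H'\cong O\) or \(H'\cong H:=O-AB+Cc\).

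\emph{Excluding \(H\).} It remains to show that \(C\) has multiplicity one in \(\ED(H)\).
\begin{itemize}[nosep]
\item In \(O\), the endpoints \(A,B\) drop to degree \(3\), so \(C\) has degree sequence \(3^2,4^4\).
\item In \(H\), the vertices \(A,B\) have degree \(3\), the vertices \(a,b\) have degree \(4\), and \(C,c\) have degree \(5\).
\item Deleting an edge \(xy\) of \(H\) lowers exactly \(\deg x\) and \(\deg y\) by one. The result has sequence \(3^2,4^4\) only if two \(5\)'s become \(4\)'s and no other degree changes.
\end{itemize}
So the deleted edge must be \(Cc\), the unique edge of \(H\) joining its two degree-five vertices. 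Indeed \(H-Cc=C\), and every other card of \(H\) has a different degree sequence. Hence \(C\) occurs exactly once in \(\ED(H)\), so \(\{\!\{C,C\}\!\}\not\preccurlyeq\ED(H)\), and the pair identifies \(O\).

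\emph{Lower bound.} For \(\Cern_{\M}(O)\ge2\), every one-card subdeck of \(\ED(O)\) is \(\{\!\{C\}\!\}\). This singleton is contained in \(\ED(H)\), yet \(H\not\cong O\) because their degree sequences differ. Thus no single card identifies \(O\). Equivalently, by \cref{prop:one-card-test}, the unique card type of \(O\) has the nonisomorphic parent \(H\).

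The only step needing real care is verifying the adjacency structure of \(H\) and its degree sequence \(3^2,4^2,5^2\). This is a direct check from the explicit edge list. From it, the multiplicity count follows immediately.
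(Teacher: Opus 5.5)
Your proposal is correct and follows the paper's proof: edge-transitivity reduces $\ED(O)$ to a single card type; the two-parent lemma leaves only $O$ and the flip mate $H$ as parents; and the degree-sequence count shows that $H$ contains exactly one copy of that card, so one card fails while two copies identify $O$. Your extra appeal to the noncubic-case proposition for the upper bound is redundant but harmless.
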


Thus both alternatives in \cref{cor:values} occur, and both occur for
nontrivial reasons: a one-card graph may have two possible completions that
are isomorphic, while a two-card graph may require multiplicity to distinguish
two nonisomorphic completions.

\FloatBarrier
\section{Concluding remarks}

The theorem resolves the numerical question but leaves a natural structural
problem: characterize intrinsically the maximal planar graphs for which one
card suffices.  \Cref{prop:one-card-test} is exact but expressed in terms of
parents of a card; \cref{prop:infinite-one-card} shows that the one-card side
is not confined to finitely many small exceptions.

The proof also illustrates why full-deck reconstructibility and
reconstruction numbers should be kept separate.  The older full-deck results
provide important context, but the two-card theorem comes from controlling
one local ambiguity and then finding a short certificate that destroys it.
This viewpoint may be useful for triangulations of other surfaces, where an
edge deletion again exposes local completions but embedding uniqueness and
global topology require additional care.

\clearpage
\nocite{*}
\bibliographystyle{unsrtnat}
\bibliography{maximal_planar_edge_reconstruction}

\clearpage
\appendix

\section{Planar preliminaries and small graphs}\label{app:preliminaries}

We use the standard spherical embedding of a maximal planar graph.  If
\(n\ge3\), every face is triangular, \(|E|=3n-6\), the graph is
three-connected when \(n\ge4\), and
\[
                  3\le \delta(G)\le5.
\]
The upper bound follows from the average degree
\(2|E|/n=6-12/n<6\).  Around a vertex \(v\), its neighbours occur on a
cycle, called the \emph{link} of \(v\).

\begin{lemma}[Adjacent cubic vertices]\label{lem:adjacent-cubic}
If two degree-three vertices of a simple maximal planar graph are adjacent,
then the graph is \(K_4\).
\end{lemma}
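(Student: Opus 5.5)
The plan is a local argument in the spherical embedding, using only that every face is a triangle and that the graph is simple. Let \(u\) and \(v\) be adjacent vertices of degree three in a maximal planar graph \(G\). We may assume \(n\ge4\), since for \(n=3\) the graph \(K_3\) has no vertex of degree three, and so \(G\) is three-connected. The goal is to show that \(V(G)=\{u,v\}\cup N(u)\cup N(v)\) has exactly four vertices.

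\emph{The two neighbourhoods agree.} Write the link of \(u\) as a triangle \(v\,x\,y\). The link is a cycle of length \(\deg u=3\), and consecutive link vertices are adjacent. Hence \(vx\), \(xy\) and \(yv\) are edges, and \(uvx\), \(uxy\), \(uyv\) are faces. The two faces at the edge \(uv\) are \(uvx\) and \(uyv\), so they appear among the faces around \(v\). Consequently \(x\) and \(y\) lie in the link of \(v\). Since \(\deg v=3\), we get \(N(v)=\{u,x,y\}\), and the link of \(v\) is the cycle \(u\,x\,y\). It follows that \(vxy\) is also a face.

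\emph{The graph has only four vertices.} The faces \(uxy\) and \(vxy\) are the two faces incident with the edge \(xy\). The faces \(uvx\), \(uxy\), \(vxy\) all contain \(x\). They are consecutive in the rotation at \(x\), in the order \(uxy\), \(uvx\), \(vxy\), and they close up, so the link of \(x\) is the triangle \(u\,v\,y\). The same argument gives the link of \(y\) as \(u\,v\,x\). Thus all four vertices \(u,v,x,y\) have their links inside \(\{u,v,x,y\}\), and no edge leaves this set. Connectivity of \(G\) then forces \(V(G)=\{u,v,x,y\}\). These four vertices are pairwise adjacent, so \(G\cong K_4\).

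\emph{A non-rotational alternative.} If \(n>4\), the pair \(\{x,y\}\) separates \(\{u,v\}\) from the remaining vertices, because \(N(u)\cup N(v)\subseteq\{u,v,x,y\}\). This contradicts three-connectivity. This version needs only the adjacency structure and avoids the rotation argument.

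The only delicate point is justifying that the faces at \(uv\) lie in both links. This is just the standard fact that each edge of a triangulation lies in exactly two faces. With it, the separation argument is the cleaner route to write up.
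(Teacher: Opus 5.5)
Your proof is correct and follows essentially the same route as the paper. Both arguments force the two facial opposites of the edge \(uv\) to be the remaining neighbours of both cubic vertices, so \(u,v,x,y\) induce \(K_4\) and \(uvx,uxy,uyv,vxy\) are faces. The paper then just observes that these are the four faces of the spherical \(K_4\), leaving no room for another vertex. Your rotation check at \(x\) and \(y\) plus connectivity makes that closing step explicit. Your alternative ending, that \(\{x,y\}\) would otherwise be a \(2\)-separator contradicting three-connectivity, is an equally valid shortcut.
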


\begin{proof}
Let \(x,y\) be adjacent cubic vertices, and let \(r,s\) be the two facial
opposites of \(xy\).  Since both \(x\) and \(y\) have degree three, their
neighbour sets are \(\{y,r,s\}\) and \(\{x,r,s\}\), respectively.  The
link of \(x\) contains \(rs\), so \(x,y,r,s\) induce \(K_4\).  Moreover the
four triangles \(xyr,xys,xrs,yrs\) are faces of the given triangulation.
They are the four faces of the spherical embedding of \(K_4\), so there is
no face in which another vertex could lie.  Hence the whole graph is
\(K_4\).
\end{proof}

\begin{corollary}\label{cor:cubic-independent}
In a maximal planar graph other than \(K_4\), the cubic vertices form an
independent set.  If \(v\) is cubic with neighbours \(a,b,c\), then
\(G-v\) is maximal planar and \(abc\) is one of its faces.
\end{corollary}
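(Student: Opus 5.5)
The statement to prove is \cref{cor:cubic-independent}, and it follows almost directly from \cref{lem:adjacent-cubic}.

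\emph{Independence.} Suppose \(G\ne K_4\) and two cubic vertices \(x,y\) were adjacent. Then \cref{lem:adjacent-cubic} gives \(G\cong K_4\), a contradiction. So no two cubic vertices are adjacent, and the cubic vertices form an independent set.

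\emph{The second assertion.} Let \(v\) be cubic with neighbours \(a,b,c\), and assume \(n\ge4\), which holds since \(v\) has three neighbours. I would argue from the fixed spherical embedding.

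First, identify the faces at \(v\). The link of \(v\) is a cycle through its three neighbours, so it is the triangle \(abc\). In particular \(ab,bc,ca\in E(G)\). The faces incident with \(v\) are exactly \(vab\), \(vbc\) and \(vca\), and their union is a closed disk bounded by the triangle \(abc\).

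Next, delete \(v\) and its three incident edges. This merges those three faces into a single region bounded by the cycle \(abc\). Every other face of \(G\) is untouched and remains triangular. So \(G-v\) is an embedded graph on \(n-1\ge3\) vertices in which every face is a triangle.

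Then check simplicity and edge count. The graph \(G-v\) is simple, being an induced subgraph of a simple graph. It has \((3n-6)-3=3(n-1)-6\) edges, which is the maximum for a planar graph on \(n-1\) vertices. Hence \(G-v\) is maximal planar. The new region bounded by \(abc\) is a face of this embedding, and by Whitney uniqueness of the embedding it is a face in every embedding.

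\emph{Degenerate case.} When \(n=4\) we have \(G=K_4\). Then \(G-v\) is the triangle \(abc\), which is maximal planar under the stated conventions, and \(abc\) is a face.

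There is no real obstacle. The only point needing care is the claim that the link of a cubic vertex is the triangle on its neighbours. This is the standard fact that in a triangulation with \(n\ge4\) the faces around a vertex form a wheel.
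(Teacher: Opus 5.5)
Your proof is correct and is essentially the paper's argument: independence comes straight from \cref{lem:adjacent-cubic}, and deleting \(v\) merges the three faces \(vab,vbc,vca\) into the single triangular face \(abc\) while every other face stays triangular. Your edge count \(3(n-1)-6\) and the Whitney remark only make explicit why this plane graph is maximal planar and why \(abc\) is a face in every embedding, and your separate \(n=4\) case is a harmless addition.
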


\begin{proof}
Independence follows from \cref{lem:adjacent-cubic}.  The neighbours of
\(v\) occur as a three-cycle in its link.  Deleting \(v\) replaces the three
incident triangular faces by the single face \(abc\); every face remains
triangular, so \(G-v\) is maximal planar.
\end{proof}

\begin{lemma}[Small orders]\label{lem:small-orders}
The maximal planar graphs on three, four, and five vertices are unique up to
isomorphism, namely \(K_3\), \(K_4\), and \(K_5-e\).  Each has class
edge-reconstruction number one.
\end{lemma}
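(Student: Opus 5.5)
The proof has two parts: uniqueness of the maximal planar graphs on $3$, $4$ and $5$ vertices, and then a one-card identification for each of them. For each part the key observation is that an edge card keeps every vertex, so any maximal-planar parent of a card of $G$ has the same order $n$ as $G$. Once uniqueness at order $n$ is known, every parent of any single card of $G$ is isomorphic to $G$. That card then identifies $G$ within $\M$, so $\Cern_{\M}(G)\le 1$.

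\emph{Uniqueness.} A maximal planar graph on $n\ge3$ vertices has exactly $3n-6$ edges, by the facts recalled at the start of \cref{app:preliminaries}.
\begin{itemize}
\item For $n=3$ this gives $3$ edges on $3$ vertices, so the graph is $K_3$.
\item For $n=4$ it gives $6=\binom42$ edges, so the graph is $K_4$.
\item For $n=5$ it gives $9=\binom52-1$ edges. The graph is therefore $K_5$ minus one edge, and $K_5-e$ is unique up to isomorphism because $K_5$ is edge-transitive.
\end{itemize}
To complete the picture, $K_5-e$ is indeed maximal planar. It is planar, as \cref{fig:one-card} exhibits, and the only possible added edge would give $K_5$, which is nonplanar. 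Likewise $K_3$ and $K_4$ are planar and complete, hence maximal.

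\emph{One card suffices.} Let $G$ be one of these three graphs and let $X=G-e$ be any of its edge cards, which exist since $G$ has at least three edges. Suppose $H\in\M$ has $X$ in its edge deck. Then $H$ has $|V(X)|=|V(G)|$ vertices, so $H\cong G$ by uniqueness. Hence the one-card subdeck $\{\!\{X\}\!\}$ identifies $G$ and $\Cern_{\M}(G)\le1$.

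\emph{The value is not zero.} The empty subdeck is contained in the deck of every member of $\M$. Since $\M$ contains graphs of different orders, the empty subdeck identifies nothing, so $\Cern_{\M}(G)\ge1$. Together with the previous step, $\Cern_{\M}(G)=1$. In particular this proves \cref{prop:k5minus}.

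The main obstacle is small: making sure the card convention retains both endpoints of the deleted edge, so that the order of any parent is forced. The remark about multiplicity plays no role here, because only a single card is used.
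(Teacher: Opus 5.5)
Your proposal is correct and follows essentially the same route as the paper: uniqueness comes from the edge count $3n-6$, and then one card suffices because an edge card keeps all $n$ vertices, so every maximal-planar parent is the unique graph of that order. You also spell out details the paper leaves implicit or calls ``by definition'': edge-transitivity of $K_5$, maximality of $K_3$, $K_4$, $K_5-e$, and why the empty subdeck cannot identify anything.
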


\begin{proof}
The uniqueness assertions follow directly from \(|E|=3n-6\).  For a fixed
order among three, four, and five, an edge card has that same number of
vertices, and any maximal-planar parent must be the unique graph of that
order.  Hence any one card identifies its parent within \(\M\).
\end{proof}

\section{The two-parent lemma}\label{app:two-parent}

\begin{proof}[Proof of \cref{lem:two-parent}]
Write \(e=uv\), let \(x,y\) be its facial opposite vertices, and put
\[
                  C=G-uv,\qquad H=C+xy=G^e.
\]
Both \(G\) and \(H\) are maximal planar and hence three-connected.

We first prove that \(C\) is three-connected.  Suppose that \(C-S\) is
disconnected for some \(|S|\le2\).  Since \(G-S=(C-S)+uv\) is connected,
the vertices \(u,v\) lie outside \(S\) and in different components of
\(C-S\).  Since \(H-S=(C-S)+xy\) is connected, the vertices \(x,y\) also
lie outside \(S\) and in different components.  But the path \(u-x-v\) is
present in \(C-S\), putting \(u\) and \(v\) in the same component, a
contradiction.

By Whitney's theorem \cite{Whitney1932}, \(C\) has a unique spherical
embedding up to reflection.  It has \(3n-7\) edges and hence, by Euler's
formula, \(2n-5\) faces.  The sum of its face lengths is \(6n-14\), exactly
one more than three times the number of faces.  Consequently it has one
quadrilateral face and every other face is triangular.  A one-edge planar
completion can only insert a diagonal of that quadrilateral.  Its two
diagonals are \(uv\) and \(xy\), giving \(G\) and \(H\).  These are
therefore the only maximal-planar parents of \(C\), up to isomorphism.
\end{proof}

\section{Degree bookkeeping}\label{app:degree-bookkeeping}

For a graph \(X\), let
\[
  \nu(X)=(\nu_0(X),\nu_1(X),\ldots),
\]
where \(\nu_i(X)\) is the number of vertices of degree \(i\).  Let
\(\eps_i\) be the \(i\)-th unit vector.  For an edge \(rs\in E(X)\), define
its endpoint-degree vector by
\[
             A_X(rs)=\eps_{d_X(r)}+\eps_{d_X(s)}.
\]
Finally let \(L\eps_i=\eps_{i-1}-\eps_i\), extended linearly to vectors of
finite support.

\begin{lemma}[Deletion identity]\label{lem:deletion-identity}
For every edge \(rs\) of \(X\),
\[
             \nu(X-rs)=\nu(X)+L A_X(rs).
\]
The map \(L\) is injective on finite-support integer vectors.
\end{lemma}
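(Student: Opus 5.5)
The deletion identity is a direct count of how the degree histogram changes, so I will verify it coordinatewise and then handle injectivity separately.

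\emph{The identity.} Deleting \(rs\) leaves every vertex other than \(r,s\) with the same degree. The degree of \(r\) drops from \(d_X(r)\) to \(d_X(r)-1\), and likewise for \(s\). Therefore \(\nu(X-rs)\) is obtained from \(\nu(X)\) in two moves:
\begin{itemize}
\item subtract \(\eps_{d_X(r)}\) and add \(\eps_{d_X(r)-1}\), which is the vector \(L\eps_{d_X(r)}\);
\item then do the same for \(s\), adding \(L\eps_{d_X(s)}\).
\end{itemize}
The case \(d_X(r)=d_X(s)\) needs no separate treatment. The two moves are additive: together they remove two vertices from class \(d\) and put two into class \(d-1\), which is exactly \(2L\eps_d\). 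Both degrees are at least one because \(rs\) is an edge, so \(\eps_{d-1}\) always has a nonnegative index and \(L\) never reaches below \(\eps_0\). Summing the two moves and using linearity of \(L\) gives \(\nu(X-rs)=\nu(X)+LA_X(rs)\).

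\emph{Injectivity.} This is the step that needs care. Written as \(L\eps_i=\eps_{i-1}-\eps_i\), one has \(L\eps_0=\eps_{-1}-\eps_0\), so I must fix a convention that keeps \(\eps_{-1}\) meaningful. I will interpret index \(-1\) as a genuine coordinate, that is, \(L\) maps into vectors indexed by \(\mathbb Z_{\ge -1}\). This costs nothing, since only vectors \(A_X(rs)\) supported on indices at least one are ever fed to \(L\).

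Let \(w=\sum_i w_i\eps_i\) have finite support and suppose \(Lw=0\). The coefficient of \(\eps_{j}\) in \(Lw\) is \(w_{j+1}-w_j\) for every \(j\ge -1\), with the convention \(w_{-1}=0\). Setting these to zero gives \(w_{j+1}=w_j\) for all \(j\ge -1\), so every \(w_j\) equals \(w_{-1}=0\). Hence \(w=0\).

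This argument actually proves more than needed: \(w\) is determined by the partial sums of \(Lw\), so injectivity holds even on the restricted domain of vectors supported on indices at least one. Alternatively, finite support alone forces the constant value to be zero by looking at large indices, without any appeal to the convention at \(-1\).

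I will state this convention explicitly so that the later uses of \(L\) are unambiguous. In those uses, two edges whose endpoint-degree vectors give the same \(\nu\)-change must have equal endpoint-degree multisets.
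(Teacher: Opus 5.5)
Your proof is correct and essentially the paper's: the identity is the same per-vertex count (each endpoint contributes \(L\eps_{d_X(\cdot)}\)). Your alternative injectivity argument, where finite support forces the constant \(w_j\) to vanish at large indices, is the paper's top-down argument, which takes the largest nonzero coordinate \(k\) of \(z\) and notes \((Lz)_k=-z_k\neq0\). Your primary bottom-up route through an explicit \(\eps_{-1}\) coordinate is also valid, but the top-down version needs no convention at the bottom index.
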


\begin{proof}
Deleting \(rs\) lowers the degrees of \(r\) and \(s\) by one, which gives
the displayed identity.  If a nonzero finite-support vector \(z\) has
largest nonzero coordinate \(k\), then the \(k\)-th coordinate of \(Lz\)
is \(-z_k\), and is nonzero.  Thus \(L\) is injective.
\end{proof}

\begin{lemma}[Residual identity]\label{lem:residual}
Suppose the same card has completions \(G=C+e\) and \(H=C+e'\), and set
\[
                  Q=A_G(e)-A_H(e').
\]
If \(G-g\cong H-h\), then
\[
                  A_G(g)-A_H(h)=Q.                 \tag{C.1}\label{eq:residual}
\]
Write \(Q=P-N\), where \(P,N\) have disjoint supports and nonnegative
integer entries.  Their common total mass \(r\) is zero, one, or two.  If
every edge card of \(G\) has a degree-sequence match among the edge cards of
\(H\), then:
\begin{enumerate}[label=\textup{(\roman*)},leftmargin=2.2em]
\item if \(r=2\), every edge of \(G\) has the same unordered endpoint-degree
pair;
\item if \(r=1\), the vertices of one fixed degree form a vertex cover of
\(G\);
\item if \(r=0\), then \(Q=0\).
\end{enumerate}
\end{lemma}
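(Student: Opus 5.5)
The plan is to work entirely at the level of degree-count vectors $\nu$ and to use \cref{lem:deletion-identity} twice: once for the common card $C$, and once for the pair of matching cards $G-g$, $H-h$. First, since $C=G-e=H-e'$, the deletion identity gives
\[
  \nu(C)=\nu(G)+L A_G(e)=\nu(H)+L A_H(e'),
\]
so $\nu(G)-\nu(H)=-LQ$.

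Next, suppose $G-g$ and $H-h$ have the same degree sequence; isomorphism is more than we need. Then
\[
  \nu(G)+L A_G(g)=\nu(H)+L A_H(h).
\]
Substituting the first relation gives $L\bigl(A_G(g)-A_H(h)\bigr)=LQ$. Injectivity of $L$ from \cref{lem:deletion-identity} then yields \eqref{eq:residual}. I will record explicitly that only equality of degree sequences was used, since the enumerated conclusions assume only a degree-sequence match.

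For the mass claim, $A_G(e)$ and $A_H(e')$ are nonnegative integer vectors of total mass $2$. Writing $Q=P-N$ with disjoint supports, the total mass of $Q$ is $0$, so $P$ and $N$ have equal mass $r$. Also $P\le A_G(e)$ coordinatewise, so $r\le 2$.

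The key step for (i)--(iii) is a coordinatewise lower bound. Take any edge $g$ of $G$ and a matching edge $h$ of $H$. By \eqref{eq:residual},
\[
  A_G(g)=P+\bigl(A_H(h)-N\bigr).
\]
On the support of $P$ the vector $N$ vanishes, and $A_H(h)\ge0$, so $A_G(g)\ge P$ on that support. Off that support $P$ is zero and $A_G(g)\ge0$. Hence $A_G(g)\ge P$ everywhere, and the three cases follow from this one inequality.

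If $r=2$, then $A_G(g)$ has mass $2$ and dominates $P$, which also has mass $2$, so $A_G(g)=P$ for every edge $g$. Thus every edge of $G$ has the same unordered endpoint-degree pair. If $r=1$, then $P=\eps_i$ for a single fixed $i$, and $A_G(g)\ge\eps_i$ says every edge has an endpoint of degree $i$. So the degree-$i$ vertices form a vertex cover. If $r=0$, then $P=N=0$ and $Q=0$ trivially.

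I expect no serious obstacle. The only point needing care is the sign bookkeeping in the first step: deletion lowers degrees, and $L$ moves mass downward. The lower bound $A_G(g)\ge P$ uses only that $P$ and $N$ have disjoint supports.
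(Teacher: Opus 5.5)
Your proof is correct and follows the paper's argument: two applications of \cref{lem:deletion-identity} plus injectivity of \(L\) give \eqref{eq:residual}, and the coordinatewise bound \(A_G(g)\ge P\) then yields (i)--(iii). Your remark that only equality of degree sequences is used is a useful clarification. The paper states the first step for isomorphic cards, but then applies it under the weaker degree-sequence-match hypothesis.
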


\begin{proof}
Apply \cref{lem:deletion-identity} to the isomorphic cards \(G-g\) and
\(H-h\), and also to the common card \(G-e=H-e'\).  Subtracting the two
equalities and using injectivity of \(L\) gives \eqref{eq:residual}.

The vectors \(A_G(e)\) and \(A_H(e')\) both have total mass two, so the
positive and negative parts of their difference have a common mass
\(r\in\{0,1,2\}\).  From \eqref{eq:residual},
\[
                  A_H(h)=A_G(g)-P+N.
\]
On the support of \(P\), nonnegativity forces \(P\le A_G(g)\)
coordinatewise.  If \(r=2\), the two vectors have the same total mass and
hence \(P=A_G(g)\) for every \(g\).  If \(r=1\), the single positive unit
must occur at an endpoint of every edge.  The case \(r=0\) is immediate.
\end{proof}

\section{Graphs of minimum degree at least four}\label{app:high-degree}

\begin{lemma}[Incident flips]\label{lem:incident-flips}
A degree-four vertex is incident with at least two flippable edges.  A
degree-five vertex is incident with at least three.
\end{lemma}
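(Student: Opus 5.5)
Let \(v\) have degree \(d\in\{4,5\}\), let its link cycle be \(w_0w_1\dots w_{d-1}w_0\) (indices mod \(d\)), and write \(e_i=vw_i\).

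\textbf{Step 1: identify the blockers.} The two faces containing \(e_i\) are \(vw_{i-1}w_i\) and \(vw_iw_{i+1}\), so the facial opposites of \(e_i\) are \(w_{i-1}\) and \(w_{i+1}\). Hence \(e_i\) fails to be flippable exactly when \(w_{i-1}w_{i+1}\in E(G)\). Call such an edge a \emph{blocker} of \(e_i\).

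For \(d\ge4\) the vertices \(w_{i-1}\) and \(w_{i+1}\) are not consecutive on the link. So a blocker is a chord of the link cycle and is not an edge of the closed star of \(v\). In the spherical embedding, the closed star of \(v\) is a disk bounded by the link. Every edge other than the spokes and link edges therefore lies in the complementary open disk \(D\), with endpoints on its boundary. Two distinct blockers are edges of a plane graph drawn inside \(D\), so they cannot cross. That is, their endpoint pairs do not interleave along the link cycle.

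\textbf{Step 2, degree four.} The possible blockers are \(w_0w_2\) (blocking \(e_1\) and \(e_3\)) and \(w_1w_3\) (blocking \(e_0\) and \(e_2\)). These two chords interleave, so at most one is present. Each blocks exactly two spokes, so at most two spokes are non-flippable and at least two are flippable.

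\textbf{Step 3, degree five.} The chord \(w_{i-1}w_{i+1}\) blocks only \(e_i\), since the short chords of a pentagon correspond bijectively to spokes. So it suffices to show that at most two such chords are present. Any two chords of a pentagon that do not share an endpoint interleave. A noncrossing set of pentagon chords therefore consists of chords through a common vertex, and there are at most two of them. Hence at most two spokes are blocked and at least three are flippable.

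I expect the main subtlety to be justifying the noncrossing claim rigorously: that blockers are genuinely distinct from the link edges and lie in the complementary disk. This uses \(d\ge4\) and simplicity. Otherwise the argument is a short count. For degree five I would also double-check that the multi-edge issue cannot arise, which it cannot since the graph is simple.
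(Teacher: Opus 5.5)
Your proof is correct and matches the paper's argument essentially step for step. Both identify the blocker of spoke \(vw_i\) as the link chord \(w_{i-1}w_{i+1}\), note that blockers lie in the disk complementary to the closed star and so cannot cross, and then count: the two crossing diagonals when \(d=4\), and at most two noncrossing pentagon diagonals when \(d=5\) (your argument that disjoint diagonals interleave just spells out the paper's one-line claim).
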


\begin{proof}
Let \(v_0,\ldots,v_{d-1}\) be the cyclic neighbours of \(v\), with indices
modulo \(d\).  The spoke \(vv_i\) is nonflippable exactly when the blocker
\(v_{i-1}v_{i+1}\) is present.  The closed star of \(v\) is a disk, so all
blockers are chords in the complementary disk and are pairwise noncrossing.
For \(d=4\), the two possible blocker diagonals each block two opposite
spokes, and at most one of the two diagonals can be present.  At least two
spokes are therefore flippable.  For \(d=5\), the five blockers are distinct
and a noncrossing set of pentagon diagonals has size at most two.  At least
three spokes are flippable.
\end{proof}

\begin{lemma}[A low--low flip]\label{lem:low-low-flip}
Let \(G\) be maximal planar with minimum degree \(\delta\in\{4,5\}\).  Put
\[
 L=\{v:d(v)=\delta\},\qquad R=V(G)\setminus L.
\]
If \(R\) is nonempty and independent, then \(G\) has a flippable edge with
both endpoints in \(L\).
\end{lemma}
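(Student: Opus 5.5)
The plan is to look at the link of a vertex of \(R\) and find a flippable edge among the link edges. Fix \(r\in R\) and let \(w_0,\dots,w_{d-1}\) be its cyclic neighbours, with indices modulo \(d=d(r)\). Since \(R\) is independent, every \(w_i\) lies in \(L\). Since \(d(r)>\delta\ge4\), we have \(d\ge5\). Each link edge \(w_iw_{i+1}\) therefore already has both endpoints in \(L\), so it suffices to show that some link edge is flippable.

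The facial opposites of \(w_iw_{i+1}\) are \(r\) and a vertex \(z_i\) lying in the disk \(D\) complementary to the closed star of \(r\). The edge is nonflippable exactly when \(rz_i\in E(G)\), that is, when \(z_i=w_j\) for some \(j\notin\{i,i+1\}\). Suppose, for contradiction, that every link edge is nonflippable, and fix one blocked edge \(w_iw_{i+1}\) with apex \(w_j\). Because \(d\ge5\), at least one of \(w_iw_j\) and \(w_{i+1}w_j\) joins two vertices that are not consecutive on the link. For example, if \(j=i+2\), then \(w_iw_{i+2}\) is such an edge. That edge is a chord of the link cycle drawn inside \(D\). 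In particular, chords exist, and, as in the proof of \cref{lem:incident-flips}, they are pairwise noncrossing.

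Next I would choose a chord \(w_pw_q\) whose cut-off arc of the link (the arc whose region of \(D\) contains no other chord) is as short as possible. The arc has length at least \(2\). If its length were at least \(3\), I would take a link edge on that arc. Its blocking apex lies in the region bounded by the chord and the arc, so the apex is a link vertex of the arc. Its blocking triangle then produces a chord with a strictly shorter cut-off arc, contradicting minimality. The one point needing care is when the apex is \(w_p\) or \(w_q\) itself. Even then, the new side of the triangle is a chord nested strictly inside, because the arc has length at least \(3\).

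Hence the arc is \(w_p\,w\,w_q\) with a single interior vertex \(w\). Moreover, the face on \(w_pw\) away from \(r\) is \(w_pww_q\), since its apex must be a link vertex in that region. Rotating around \(w\), its neighbours are then exactly \(r,w_p,w_q\), so \(d(w)=3<\delta\). This contradicts \(\delta\ge4\), so some link edge is flippable, which gives the required flippable \(L\)--\(L\) edge.

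I expect the minimal-chord step to be the main obstacle. The nesting argument has to be made rigorous: one must show that the blocking apex of an edge on the arc really stays inside the cut-off region, and handle cleanly the degenerate cases where the apex is an endpoint of the chord. A fallback is to triangulate \(D\): all apexes lie on the cycle, so there are no interior vertices, and an ear of a polygon triangulation with \(d\ge4\) sides gives a degree-three vertex.
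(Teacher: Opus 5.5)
Your proof is correct, but it takes a different route from the paper's.

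\textbf{The paper's route.} It fixes a minimum-degree vertex $v\in L$ and works with its spokes. Since $R$-vertices are never consecutive on the link of $v$, enough spokes end in $L$ that the quadrilateral/pentagon count of noncrossing blockers from \cref{lem:incident-flips} yields a contradiction. The one exception is $\delta=4$ with two opposite link vertices in $R$; there the common blocker of the two $L$-spokes would be an $R$--$R$ edge.

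\textbf{Your route.} You fix $r\in R$ and use independence only to place the whole link of $r$ inside $L$. You then prove a fact that makes no reference to $L$ and $R$: when $\delta\ge4$, the link of every vertex contains a flippable edge. Otherwise an innermost chord of the link cycle cuts off an ear $w_p\,w\,w_q$, and the ear vertex $w$ has degree three. Your nesting argument is sound, including the degenerate case where the apex is an endpoint of the chord. There the other side of the blocking triangle, $w_pw_{q-1}$ or $w_{p+1}w_q$, is strictly nested because the arc has length at least three.

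\textbf{Comparison.} The paper's version is a short finite check that reuses \cref{lem:incident-flips} and exploits $\delta\le5$. It does not even use $R\neq\varnothing$. Yours needs no case split on $\delta$ and no bound on the link length, and it locates the flippable $L$--$L$ edge on the link of any prescribed vertex of $R$. The cost is a genuine, if elementary, planar-topology argument about nested chords in the complementary disk.

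\textbf{A caution about your fallback.} Nonflippability of the link edges does not by itself imply that $D$ has no interior vertices. Such vertices could sit inside a region of $D$ bounded entirely by chords, for instance the central triangle of a hexagon dissected by $w_0w_2,w_2w_4,w_4w_0$. So the fallback would need the same ear argument. Your main argument does not rely on it.
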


\begin{proof}
Suppose instead that every \(L\)-\(L\) edge is nonflippable, and fix
\(v\in L\).  Since consecutive vertices of the link are adjacent, no two
\(R\)-vertices are consecutive there.

If \(\delta=5\), at most two link vertices lie in \(R\), so at least three
spokes from \(v\) end in \(L\).  Their nonflippability would require three
distinct noncrossing pentagon diagonals, contradicting
\cref{lem:incident-flips}.

Let \(\delta=4\).  If the link contains at most one vertex of \(R\), at
least three \(L\)-spokes are nonflippable.  The two possible quadrilateral
diagonals block opposite pairs of spokes, so blocking three requires both
crossing diagonals, impossible.  If the link contains two vertices of
\(R\), they occupy opposite positions.  The two \(L\)-spokes are also
opposite and share as blocker the diagonal joining those two \(R\)-vertices.
That blocker contradicts the independence of \(R\).  In every case we obtain
a contradiction.
\end{proof}

\begin{proof}[Proof of \cref{prop:high-degree}]
Euler's formula gives \(\delta=4\) or \(5\).  Use the sets \(L,R\) from
\cref{lem:low-low-flip}.

\smallskip
\noindent\emph{Case 1: \(G[R]\) contains an edge.}
Choose \(g\in E(G[R])\), choose \(v\in L\), and by
\cref{lem:incident-flips} choose a flippable edge \(e\) incident with
\(v\).  Let \(H=G^e\).  At least one old endpoint of \(e\) has degree
\(\delta-1\) in \(H\), and no vertex has smaller degree.  Every card of
\(H\) either retains such a vertex or lowers it to degree \(\delta-2\).
By contrast, both endpoints of \(g\) have degree at least \(\delta+1\) in
\(G\), so \(G-g\) has minimum degree \(\delta\).  Thus \(G-g\) is absent
from \(\ED(H)\).  The first card \(G-e\) has only the parents \(G,H\) by
\cref{lem:two-parent}; the pair \(G-e,G-g\) therefore identifies \(G\).

\smallskip
\noindent\emph{Case 2: \(R\) is nonempty and independent.}
By \cref{lem:low-low-flip}, choose a flippable \(L\)-\(L\) edge \(e=uv\),
and let \(H=G^e\).  The vertices \(u,v\) have degree \(\delta-1\) in
\(H\), and they are nonadjacent there.  Since \(G\) is connected and
\(R\ne\varnothing\), there is an \(L\)-\(R\) edge \(g\).  The card
\(G-g\) has exactly one degree-\(\delta-1\) vertex and none of smaller
degree.  A card of \(H\) that avoids \(u,v\) retains both
degree-\(\delta-1\) vertices; a deletion incident with one of them creates a
degree-\(\delta-2\) vertex and retains the other.  Hence no card of \(H\)
has the degree signature of \(G-g\).  Again \(G-e,G-g\) identify \(G\).

\smallskip
\noindent\emph{Case 3: \(R=\varnothing\).}
Now \(G\) is \(\delta\)-regular.  Choose a flippable edge \(e=uv\), with
facial opposites \(x,y\), and let \(H=G^e\).  In \(H\), the vertices
\(u,v\) have degree \(\delta-1\), the vertices \(x,y\) have degree
\(\delta+1\), and every other vertex has degree \(\delta\).  Every physical
card of \(G\) has degree signature
\[
               (\delta-1)^2\,\delta^{,n-2}.
\]
The only physical card of \(H\) with that signature is \(H-xy\): any other
deletion leaves at least one degree-\(\delta+1\) vertex.  Choose any edge
\(g\ne e\).  The selected two-card multiset \(\{\!\{G-e,G-g\}\!\}\) has
two cards with the regular-card signature, whereas \(H\) has only one.  It
therefore excludes \(H\), whether or not the two selected cards are
isomorphic.  The two-parent lemma completes the proof.
\end{proof}

\section{Graphs with a cubic vertex}\label{app:cubic}

\begin{proof}[Proof of \cref{prop:cubic}]
The graphs of order at most five are covered by
\cref{lem:small-orders}, so assume \(n\ge6\).  Suppose for a contradiction
that \(\Cern_{\M}(G)>2\).  Let \(v\) be cubic, with cyclic neighbourhood
\(N(v)=\{a,b,c\}\).

For each \(f\in\{ab,bc,ca\}\), let \(z_f\) be the third vertex of the face
on the side of \(f\) opposite \(v\).  The vertex \(z_f\) is outside
\(\{v,a,b,c\}\): otherwise those four vertices form \(K_4\), whose faces
and the degree of \(v\) leave no room for the remaining vertices.  Since
\(v\) has only the neighbours \(a,b,c\), the edge \(vz_f\) is absent.
Thus every \(f\) is flippable.  Put
\[
             C_f=G-f,\qquad H_f=G-f+vz_f.
\]
By \cref{lem:two-parent}, \(G\) and \(H_f\) are the only parents of
\(C_f\).  If \(H_f\cong G\), then \(C_f\) already identifies \(G\),
contrary to our assumption.  Hence \(H_f\not\cong G\).

Fix a boundary edge \(f\).  For every edge \(g\ne f\), the selected pair
\(C_f,G-g\) does not identify \(G\).  Its only possible nonisomorphic parent
after the first card is \(H_f\), so \(H_f\) contains a card isomorphic to
\(G-g\); if \(G-g\cong C_f\), the same conclusion holds with the required
multiplicity.  For \(g=f\), the common card itself is
\(G-f=H_f-vz_f\).  Consequently, for every edge \(g\) of \(G\), there is an
edge \(h\) of \(H_f\) such that
\[
                         G-g\cong H_f-h.           \tag{E.1}\label{eq:allmatch}
\]

For \(f=xy\), write \(s=d_G(z_f)\).  Since deleting \(xy\) lowers its two
endpoint degrees, while the competing completion inserts \(vz_f\), the
residual in \cref{lem:residual} is
\[
 Q_f=\eps_{d(x)}+\eps_{d(y)}-\eps_4-\eps_{s+1}.    \tag{E.2}\label{eq:cubicQ}
\]
After cancelling equal positive and negative units, let \(r_f\) be the mass
of the positive part.

We first show \(r_f\ne2\).  By \eqref{eq:allmatch} and
\cref{lem:residual}, mass two would force every edge of \(G\) to have one
fixed unordered endpoint-degree pair.  If the two degrees differ, every edge
joins the two corresponding degree classes and \(G\) is bipartite,
contradicting its triangular faces.  If they agree, connectedness makes
\(G\) regular; because \(G\) has a cubic vertex, Euler's formula then gives
\(G=K_4\), contrary to \(n\ge6\).  Hence \(r_f\in\{0,1\}\) for all three
boundary edges.

Suppose first that \(r_f=0\) for all \(f\in\{ab,bc,ca\}\).  Equation
\eqref{eq:cubicQ} says that each of the pairs
\[
           \{d(a),d(b)\},\quad \{d(b),d(c)\},\quad \{d(c),d(a)\}
\]
contains \(4\).  At least two of \(a,b,c\) therefore have degree four.
In the maximal planar graph \(T=G-v\), supplied by
\cref{cor:cubic-independent}, those two vertices are adjacent cubic
vertices.  \Cref{lem:adjacent-cubic} gives \(T=K_4\), hence \(n=5\), a
contradiction.

We may choose a boundary edge \(f\) with \(r_f=1\), and write its reduced
residual as
\[
                         Q_f=\eps_p-\eps_q.          \tag{E.3}\label{eq:massone}
\]
By \eqref{eq:allmatch} and \cref{lem:residual}, the degree-\(p\) vertices
meet every edge of \(G\).  We cannot have \(p=3\): by
\cref{cor:cubic-independent}, the cubic vertices would then be an independent
vertex cover, making both it and its complement independent and making
\(G\) bipartite.  Since \(v\) is not a degree-\(p\) vertex, each of the
edges \(va,vb,vc\) forces
\[
                         d(a)=d(b)=d(c)=p.           \tag{E.4}\label{eq:pneighbors}
\]
If \(p=4\), then \(G-v\) again has adjacent cubic vertices and is \(K_4\),
contrary to \(n\ge6\).  Thus \(p\ge5\).

Restart the construction for each boundary edge \(xy\), using its own flip
mate \(H_{xy}\), and apply \eqref{eq:cubicQ} separately.  By
\eqref{eq:pneighbors}, its uncancelled positive part is \(2\eps_p\), while
the negative part is \(\eps_4+\eps_{d(z_{xy})+1}\).  Since \(p\ne4\) and
residual mass two is impossible, one negative unit must cancel a copy of
\(\eps_p\).  Therefore
\[
       d(z_{xy})=p-1,
       \qquad Q_{xy}=\eps_p-\eps_4                  \tag{E.5}\label{eq:allQ}
\]
for each of \(ab,bc,ca\).

Let \(p=5\), and inspect the cyclic link of \(a\).  It contains the
consecutive segment \(b,v,c\).  On the complementary \(c\)-to-\(b\) arc,
the first and last vertices are \(z_{ac}\) and \(z_{ab}\), both of degree
four by \eqref{eq:allQ}.  The degree-five vertices cover every edge, so two
non-degree-five vertices cannot be adjacent.  If
\(z_{ac}=z_{ab}\), then the link has length four; if they are distinct,
they require at least one intermediate vertex and the link has length at
least six.  Both alternatives contradict \(d(a)=5\).

It remains that \(p\ge6\).  Fix \(f=ab\), write \(z=z_{ab}\), and put
\(H=H_f\).  The card \(G-va\) must occur in \(H\), so
\cref{lem:residual} and \eqref{eq:allQ} require an edge \(h\in E(H)\) with
\[
  A_H(h)=A_G(va)-Q_f
        =(\eps_3+\eps_p)-(\eps_p-\eps_4)
        =\eps_3+\eps_4.                              \tag{E.6}\label{eq:34edge}
\]
Thus \(H\) would contain a degree-three--degree-four edge.  It does not.
Indeed, the degree-\(p\) vertices cover every edge of \(G\), while the flip
changes only these degrees:
\[
  a,b:p\mapsto p-1,\qquad v:3\mapsto4,
  \qquad z:p-1\mapsto p.
\]
Every degree-three vertex of \(H\) is therefore an unchanged cubic vertex of
\(G\), and all its neighbours in \(H\) have degree at least
\(p-1\ge5\).  The new degree-four vertex \(v\) has neighbours of degrees
\(p-1,p-1,p,p\).  No degree-three--degree-four edge exists, contradicting
\eqref{eq:34edge}.  This final contradiction proves the proposition.
\end{proof}

\section{Sharpness, examples, and assembly}\label{app:assembly}

\begin{proof}[Proof of \cref{prop:one-card-test}]
If some card \(X\) of \(G\) has no nonisomorphic maximal-planar parent, then
the one-card subdeck \(\{\!\{X\}\!\}\) identifies \(G\) within \(\M\).
Conversely, if one card identifies \(G\), any maximal-planar parent of that
card must be isomorphic to \(G\).  This is exactly the stated condition.
\end{proof}

\begin{proof}[Proof of \cref{prop:k5minus}]
This is the order-five case of \cref{lem:small-orders}: every maximal planar
parent of a five-vertex edge card has five vertices, and \(K_5-e\) is the
unique maximal planar graph of that order.
\end{proof}

\begin{proof}[Proof of \cref{prop:infinite-one-card}]
Consider the card \(X=J_m-wp\).  The vertex \(w\) has degree two in \(X\),
with neighbours \(r_0,r_1\), while \(X-w=B_m\).  Every maximal planar graph
of order at least four has minimum degree at least three.  Hence any
maximal-planar one-edge completion of \(X\) must add an edge incident with
\(w\).  In such a completion \(w\) is cubic, and its three neighbours form a
facial triangle.  The common neighbours of the adjacent rim vertices
\(r_0,r_1\) in \(B_m\) are exactly the hubs \(p,q\) when \(m\ge4\).
Therefore the only completions are \(X+wp\) and \(X+wq\).  The automorphism
of \(B_m\) interchanging \(p\) and \(q\), and fixing the rim and \(w\), is
an isomorphism between these completions.  Thus every maximal-planar parent
of \(X\) is isomorphic to \(J_m\), and \cref{prop:one-card-test} gives
\(\Cern_{\M}(J_m)=1\).
\end{proof}

\begin{proof}[Proof of \cref{prop:octahedron}]
Let \(O=K_{2,2,2}\), and delete any edge \(e\).  Its two facial opposite
vertices are the two vertices in the third part and are nonadjacent, so
\(e\) is flippable.  The two parents of \(C=O-e\) are \(O\) and a
nonisomorphic flip mate \(H\), whose degree sequence is
\(3^2,4^2,5^2\).  Thus one card does not identify \(O\).

The octahedron is edge-transitive, so all twelve physical cards in
\(\ED(O)\) have the same type \(C\), with degree sequence \(3^2,4^4\).  In
\(H\), the only edge deletion with this degree sequence is deletion of the
new diagonal joining the two degree-five vertices.  Hence \(H\) contains
exactly one copy of \(C\).  By \cref{lem:two-parent}, any parent of one copy
of \(C\) is \(O\) or \(H\); a selected subdeck containing two copies excludes
\(H\).  Therefore \(\Cern_{\M}(O)=2\).
\end{proof}

\begin{proof}[Proof of \cref{thm:main,cor:values}]
By \cref{lem:small-orders}, the graphs of order at most five need one card.
For every larger maximal planar graph, the minimum degree is either three or
at least four.  \Cref{prop:cubic} handles the first case and
\cref{prop:high-degree} the second, proving the universal upper bound two.
The reconstruction number is at least one by definition.  The octahedron
requires two cards by \cref{prop:octahedron}, while \(K_5-e\) and the graphs
\(J_m\) require one by \cref{prop:k5minus,prop:infinite-one-card}.  Both
values occur and the class-wide maximum is two.
\end{proof}

\section{Independent computational audit}\label{app:computation}

The proof above is computer-free.  As an independent error check, all
unlabeled maximal planar graphs through thirteen vertices were generated with
\textsc{plantri}~5.5 \cite{BrinkmannMcKay2007}.  A separately written program
canonically encoded every edge card using \textsc{nauty} through its Python
interface \cite{McKayPiperno2014}, retained card multiplicities, and tested
every one- and two-card submultiset against every maximal-planar parent of the
same order.

\begin{table}[ht]
\centering
\caption{Exact exhaustive counts from the independent audit.  The two right
columns partition all unlabeled maximal planar graphs of each order.  No
graph with class edge-reconstruction number above two occurred.}
\label{tab:enumeration}
\begin{tabular}{@{}rrrr@{}}
\toprule
vertices & all graphs & number one & number two\\
\midrule
3  & 1    & 1    & 0\\
4  & 1    & 1    & 0\\
5  & 1    & 1    & 0\\
6  & 2    & 1    & 1\\
7  & 5    & 4    & 1\\
8  & 14   & 9    & 5\\
9  & 50   & 35   & 15\\
10 & 233  & 122  & 111\\
11 & 1249 & 444  & 805\\
12 & 7595 & 1523 & 6072\\
13 & 49566 & 5213 & 44353\\
\bottomrule
\end{tabular}
\end{table}

The audit also checked every flippable edge: at order thirteen alone there are
1,079,731 such edge occurrences.  In every case the deleted card had exactly
the original graph and its diagonal flip as parent isomorphism classes.  It
then checked the explicit witness strategies in \cref{app:high-degree} and
the cubic-boundary alternatives in \cref{app:cubic}; there were no failures.
These computations are corroborative only and are not used in any proof.
For reproducibility, the order-thirteen run used \texttt{pynauty}~2.8.8.1
and NetworkX~3.6.1.  The SHA-256 digests were as follows.
\begin{itemize}[leftmargin=1.5em,itemsep=1pt,topsep=3pt]
\item locally built \textsc{plantri} binary:\par
  {\scriptsize\ttfamily
  30d4736e0f5a4c48b91c1d57aa07b5d33574f751a896d732848f92886cf74793}
\item generated order-thirteen graph stream:\par
  {\scriptsize\ttfamily
  1a5fcd0741b8f118e291ffc3dd6d7153af670f831c75cc60480ede6aa69e4258}
\item independent verifier:\par
  {\scriptsize\ttfamily
  4dfb307b5b08de57de516d12f498fa742edf325ede7e8b5be87ebc1814ec03f3}
\item resulting JSON record:\par
  {\scriptsize\ttfamily
  3a91d4a4e50143642ea18510f6acaa1cc0fac4838c5a80f9edc411f83844d9dc}
\end{itemize}

\end{document}